\newcolumntype{C}[1]{>{\centering\arraybackslash}p{#1}}
\newcommand{\mat}[1]{\left[\begin{matrix} #1 \end{matrix}\right]}
	\newcommand\td[1]{\textcolor{red}{\textbf{TODO:} #1}} % to do 
	\newcommand\tdd[1]{} % to do done
	\newcommand\comment[1]{\textcolor{blue}{\textbf{Comment:} #1}} % comment 
	\newcommand\commentd[1]{} % comment done
	\newcommand\frage[1]{\textcolor{orange}{\textbf{Rückfrage:} #1}} 
	\newcommand\fraged[1]{}
	\newcommand\td[1]{}
	\newcommand\tdd[1]{} % to do done
	\newcommand\comment[1]{} 
	\newcommand\commentd[1]{} % comment done
	\newcommand\frage[1]{}
	\newcommand\fraged[1]{}
\newtheorem{definition}{Definition}
\newtheorem{problem}{Problem}
\newtheorem{lemma}{Lemma}
\newtheorem{proposition}{Proposition}
\newtheorem{remark}{Remark}
\newtheorem{assumption}{Assumption}
\title{\LARGE \bf
	Trustworthiness of Optimality Condition Violation in Inverse Dynamic Game Methods Based on the Minimum Principle
}
\author{Philipp Karg$^{1}$, Adrian Kienzle$^{1}$, Jonas Kaub$^{1}$, Balint Varga$^{1}$ and Sören Hohmann$^{1}$% <-this % stops a space
	\thanks{$^{1}$All authors are with the Institute of Control Systems (IRS),
		Karlsruhe Institute of Technology (KIT), 76131 Karlsruhe, Germany. Corresponding author is Philipp Karg, {\tt\small philipp.karg@kit.edu}.}%
	\thanks{This work was funded by the Deutsche Forschungsgemeinschaft (DFG, German Research Foundation) - 421428832.}
}
\begin{document}

\maketitle
\thispagestyle{fancy}
\pagestyle{fancy}

\fancyhf{}
\fancyhead[CO,CE]{\copyright 2024 IEEE. This paper has been accepted for publication and presentation at the \\8th IEEE Conference on Control Technology and Applications.}

\begin{abstract}
	In this work, we analyze the applicability of Inverse Dynamic Game (IDG) methods based on the Minimum Principle (MP). The IDG method determines unknown cost functions in a single- or multi-agent setting from observed system trajectories by minimizing the so-called residual error, i.e. the extent to which the optimality conditions of the MP are violated with a current guess of cost functions. The main assumption of the IDG method to recover cost functions such that the resulting trajectories match the observed ones is that the given trajectories are the result of a Dynamic Game (DG) problem with known parameterized cost function structures. However, in practice, when the IDG method is used to identify the behavior of unknown agents, e.g. humans, this assumption cannot be guaranteed. Hence, we introduce the notion of the trustworthiness of the residual error and provide necessary conditions for it to define when the IDG method based on the MP is applicable to such problems. From the necessary conditions, we conclude that the MP-based IDG method cannot be used to validate DG models for unknown agents but can yield under certain conditions robust parameter identifications, e.g. to measurement noise. Finally, we illustrate these conclusions by validating a DG model for the collision avoidance behavior between two mobile robots with human operators. 
\end{abstract}

%---%

\section{Introduction} \label{sec:introduction}
Inverse Optimal Control (IOC) methods have gained significant research interest in the last years. Starting with the work of Kalman \cite{Kalman.1964}, the question is asked whether a given arbitrary control law is an optimal one, i.e. the solution to an Optimal Control (OC) problem. To answer this research question, conditions on the system dynamics, control laws and cost functions were proposed under which the given control law is optimal\comment{often no constructive procedure to compute the cost functions actually}. In recent years, so-called data-based IOC methods arose (see e.g. \cite{Molloy.2018,Kamalapurkar.2018,Panchea.2017,Aghasadeghi.2014,Johnson.2013,Mombaur.2010}) which aim at computing unknown cost functions from given system trajectories such that these trajectories are equal to the optimal trajectories resulting from the determined cost functions. In addition, these methods were extended to inverse coupled dynamic optimization problems, i.e. Inverse Dynamic Games (IDG), to determine cost functions for all players such that the given system trajectories are equal to the Nash trajectories resulting from the computed cost functions (see e.g. \cite{Molloy.2022,Inga.2021,Molloy.2020,Inga.2020,Awasthi.2020,Inga.2019b,Molloy.2017}). 

There are two kinds of methods to solve such data-based inverse dynamic optimization problems. In direct approaches (see e.g. \cite{Molloy.2022,Molloy.2017,Mombaur.2010}), the error between the given system trajectories (so-called Ground Truth (GT)) and trajectories resulting from a current guess of the cost functions is minimized. Here, typically bi-level optimization problems follow. In an upper level, the trajectory error is minimized and in a lower level, a dynamic optimization problem is solved to evaluate the trajectory error in the upper level at a current cost function guess. In indirect approaches (see e.g. \cite{Molloy.2022,Inga.2021,Molloy.2020,Inga.2020,Awasthi.2020,Inga.2019b,Molloy.2017,Molloy.2018,Kamalapurkar.2018,Panchea.2017,Aghasadeghi.2014,Johnson.2013}), an important assumption, which is made on the given trajectories (see e.g. \cite[Definition~6.1]{Molloy.2022}, \cite[Assumption~1]{Molloy.2020} or \cite[Assumption~4.2]{Inga.2020}), is that they constitute a solution to a Dynamic Game (DG) problem with parameterized cost functions, e.g. running costs are represented by a linear combination of basis functions\comment{with this assumption, an exact solution for the direct approaches is guaranteed if the known parameterized cost functions are trained and if the optimization algorithm used for training yields a global minimum of the non-convex objective function based on the trajectory error - if the assumption is not fulfilled, such an optimization algorithm still yields the best possible solution with the used parameterized cost function structure (very general approximation structures for the cost function can be used in principle) - however, due to non-convex objective function and typical bi-level structure we have high computation times}. Via the indirect method, the unknown parameters of these parameterized cost functions are computed by minimizing violations of optimality conditions, i.e. the so-called residual error, that are fulfilled for the given trajectories and the unknown optimal parameters\comment{convex optimization problems can be derived that enable fast/real-time capable and even online computations as well as constitution of solution sets - however, the question is how we get the parameterized cost function structure in practice and what happens if the assumption is not fulfilled, i.e. can we still find "usable" parameters that yield sufficiently small trajectory errors (robust parameter identification)}. The residual error can be defined based on the Minimum Principle (MP), like in \cite{Molloy.2022,Molloy.2020,Inga.2020,Awasthi.2020,Molloy.2017,Molloy.2018,Johnson.2013}, the Hamilton-Jacobi-Bellman equations (see e.g. \cite{Inga.2021,Inga.2019b,Kamalapurkar.2018}), the Euler-Lagrange equations (see e.g. \cite{Aghasadeghi.2014}) or the Karush-Kuhn-Tucker conditions (see e.g. \cite{Panchea.2017})\comment{currently, no concrete formal statement that under the main assumption and perhaps further conditions the minimizers of the residual error solve the IDG problem actually - however, set of sufficient conditions can be derived from literature: if main assumption holds and if uniqueness condition (rank condition) (see e.g. \cite{Molloy.2022,Molloy.2020,Inga.2020,Molloy.2018,Aghasadeghi.2014}) hold, all parameters derived from the residual optimization are scaled versions of the unknown (but existing) optimal parameters and hence leading to zero trajectory error}.

\tdd{reformulate and clarify that the indirect methods solve another IOC problem (parameters for known model structure - see Problem 1) than typically investigated in practice and as described below}
The main assumption of residual-based IDG methods that the given trajectories are optimal w.r.t. known parameterized cost function structures is to be seen critical in most applications. Data-based IDG approaches are typically used to identify the behavior of an unknown agent or unknown mutually coupled agents from measurement data. Examples can be found in \cite{Westermann.2020,Jin.2019,Berret.2011,Mombaur.2010} for human movement identification, in \cite{Inga.2020,Inga.2019} for the cooperative steering behavior of two humans or in \cite{Molloy.2018b} for the collision avoidance behavior between two birds. In all these cases, the first question to be examined is "are the observed trajectories optimal", i.e. can a DG model be found to describe the GT data. Since the cost function structure is unknown as well in such cases, fulfillment of the main assumption cannot be guaranteed. Hence, the first crucial question for the applicability of residual-based IDG methods arises: can they be used to validate DG models at all. Then, the second question for the applicability of residual-based IDG methods is under which conditions they guarantee a robust parameter identification. Since an exact match with the GT trajectories is unrealistic even with the best DG model due to disturbances like measurement noise, the residual-based IDG method should still find these best possible parameters. For a residual-based IDG method to possess these characteristics, what we call in the following the trustworthiness of the residual error is necessary, i.e. global minimizers of the residual error yield trajectories with global minimum errors to the GT trajectories. With necessary conditions for the trustworthiness, the applicability of residual-based IDG methods for model validation and robust parameter identification can be discussed. Until now, research regarding the trustworthiness of residual errors is scarce. In literature (see e.g. \cite{Inga.2020,Inga.2019b,Molloy.2017,Johnson.2013}), only simulations with noisy GT trajectories, which result from trajectories that are optimal w.r.t. an unknown parameterization of a known cost function structure but disturbed by additive Gaussian noise, can be found. Here, the residual error seems to be approximately trustworthy, but conditions or a deeper analysis, e.g. for unknown cost function structures, is missing\comment{only result in \cite{Inga.2020} indicates that in case of wrong/unknown basis functions the trustworthiness cannot be given}.

With the contributions in this paper, we close this research gap. Firstly, we define the trustworthiness of the residual error for IDG methods based on the MP. We show that the main assumption of the IDG method (optimality of the given trajectories w.r.t. known cost function structures) is actually a necessary condition for the trustworthiness. Therefore, we conclude that the MP-based IDG method cannot be used in practice to validate a DG model. However, we further conclude that if e.g. via a direct IDG method cost function structures are found that guarantee the best possible fit of a DG model to the GT trajectories, the MP-based IDG method is robust to e.g. noisy GT data. Finally, we illustrate these conclusions in a practical example by validating a differential game model for the collision avoidance behavior between two mobile robots with human operators. 
% Although we focus on the IOC method based on the MP, we suspect that the results and conclusions generalize to all residual-based IOC methods. 

%---%

\section{Inverse Dynamic Game Method based on the Minimum Principle} \label{sec:IOC_minimum_principle}
%\begin{itemize}
%	\item differential game, nonlinear, OLNE
%	\item IDG based on minimum principle \cite{Molloy.2022,Molloy.2020,Inga.2020}
%	\item IOC approach follows for $i = 1$
%	\item definition residual error and trajectory error
%\end{itemize}

\tdd{shorten if possible}
In this section, we introduce the IDG method based on the MP \cite{Molloy.2022,Molloy.2020,Inga.2020,Molloy.2017}. 
% For the special case of one player the following DG reduces to an OC problem and the IDG approach to the IOC method in \cite{Johnson.2013}.

Let an input-affine dynamic system be defined by
\begin{align} \label{eq:dynamic_system}
	\begin{aligned}
		\dot{\bm{x}} = \bm{f}\left( \bm{x}, \bm{u}_1, \dots, \bm{u}_N \right) = \bm{f}_{\bm{x}}(\bm{x}) + \sum_{i=1}^{N} \bm{G}_i (\bm{x}) \bm{u}_i
	\end{aligned}
\end{align}
and $\bm{x}(0) = \bm{x}_0$, where $\bm{x} \in \mathbb{R}^n$ denotes the system state, $\bm{u}_i \in \mathbb{R}^{m_i}$ the control variable of player $i \in \mathcal{P} = \{1,\dots,N\}$ and $\bm{x}_0$ the initial state at the initial time $t_0 = 0$. Moreover, $\bm{f}_{\bm{x}}: \mathbb{R}^n \rightarrow \mathbb{R}^n$ and $\bm{G}_i: \mathbb{R}^n \rightarrow \mathbb{R}^{n\times m_i}$ are continuously differentiable w.r.t. $\bm{x}$. Furthermore, let $\bm{u} = \mat{ \bm{u}_1^\top & \dots & \bm{u}_N^\top }^\top$ and $\bm{u}_{\neg i} = \mat{ \bm{u}_1^\top & \dots & \bm{u}_{i-1}^\top & \bm{u}_{i+1}^\top & \dots & \bm{u}_N^\top }^\top$. In the DG, each player $i \in \mathcal{P}$ influences system~\eqref{eq:dynamic_system} by applying an open-loop control strategy $\bm{u}_i(t) = \bm{\gamma}_i\left( \bm{x}_0, t \right), \forall t \in [0,T]$ from the set of admissible strategies $\Gamma_i$ such that its individual cost function
\begin{align} \label{eq:cost_function}
	J_i = h_i(\bm{x}(T)) + \int_{0}^{T} g_i\left( \bm{x}, \bm{u}_i \right) \text{d}t
\end{align}
is minimized, where $h_i: \mathbb{R}^n \rightarrow \mathbb{R}$ denotes the terminal costs and $g_i: \mathbb{R}^n \times \mathbb{R}^{m_i} \rightarrow \mathbb{R}$ the running costs. Furthermore, $h_i$ is assumed to be continuously differentiable and convex w.r.t. $\bm{x}$ and $g_i(\bm{x},\bm{u}_i) = q_i(\bm{x}) + \bm{u}_i^\top \bm{R}_i \bm{u}_i$ with $q_i: \mathbb{R}^n \rightarrow \mathbb{R}$ being continuously differentiable w.r.t. $\bm{x}$ and $\bm{R}_i$ positive definite. 
% Here, we assume an open-loop (OL) information pattern of the players, i.e. the admissible strategies $\bm{\gamma}_i(\bm{x}_0,t_0)$ depend only on the initial state and time. In the OL pattern, the strategies are equivalent to the control trajectories $\bm{u}_i(t), \forall t \in [0,T]$ applied during the game. Overall, the DG problem is defined by finding $\forall i \in \mathcal{P}$ the optimal control strategy $\bm{\gamma}_i^* \in \Gamma_i$ such that $J_i$ is minimized. 

We assume a non-cooperative game and thus, the corresponding DG solution concept are Open-Loop Nash equilibria (OLNE) (cf.~\cite[p.~266]{Basar.1999}). Lemma~\ref{lemma:OLNE} states the necessary and sufficient conditions to calculate them.

\begin{lemma} \label{lemma:OLNE}
	Let a DG be defined by \eqref{eq:dynamic_system} and \eqref{eq:cost_function}. Furthermore, let the Hamiltonian functions
	\begin{align} \label{eq:hamiltonian}
		H_i(\bm{\psi}_i,\bm{x},\bm{u}) = g_i(\bm{x},\bm{u}_i) + \bm{\psi}_i^\top \bm{f}(\bm{x},\bm{u}), \quad \forall i \in \mathcal{P},
	\end{align}
	where $\bm{\psi}_i: [0,T] \rightarrow \mathbb{R}^n$ are so-called costate functions, be continuously differentiable and convex w.r.t. $\bm{x}$. If $\bm{\gamma}^*(\cdot) = \mat{ {\bm{\gamma}_1^*(\bm{x}_0,\cdot)}^\top & \dots & {\bm{\gamma}_N^*(\bm{x}_0,\cdot)}^\top }^\top = \bm{u}^*(\cdot)$ provides an OLNE with the corresponding state trajectory $\bm{x}^*(\cdot)$, the costate functions $\bm{\psi}_i(\cdot), \forall i \in \mathcal{P}$ fulfill
	\begin{align}
		\dot{\bm{x}}^*(t) &= \bm{f}\left( \bm{x}^*(t), \bm{u}^*(t) \right), \quad \bm{x}^*(0) = \bm{x}_0, \label{eq:OLNE_1} \\
		\bm{0} &= \nabla_{\bm{u}_i} H_i\left( \bm{\psi}_i(t), \bm{x}^*(t), \bm{u}_i, \bm{u}_{\neg i}^*(t) \right) \big \vert_{\bm{u}_i = \bm{u}_i^*(t)}, \label{eq:OLNE_2} \\
		\dot{\bm{\psi}}_i(t) &= -\nabla_{\bm{x}} H_i(\bm{\psi}_i(t), \bm{x}, \bm{u}^*(t)) \big \vert_{\bm{x} = \bm{x}^*(t)}, \label{eq:OLNE_3} \\
		\bm{\psi}_i(T) &= \nabla_{\bm{x}} h_i\left( \bm{x} \right) \big \vert_{\bm{x} = \bm{x}^*(T)}. \label{eq:OLNE_4}
	\end{align}
	Furthermore, if a set of costate functions $\bm{\psi}_i(\cdot), \forall i \in \mathcal{P}$, control $\bm{u}^*(\cdot)$ and state trajectories $\bm{x}^*(\cdot)$ satisfy \eqref{eq:OLNE_1}, \eqref{eq:OLNE_2}, \eqref{eq:OLNE_3} and \eqref{eq:OLNE_4}, $\bm{u}^*(\cdot)$ constitutes an OLNE.
\end{lemma}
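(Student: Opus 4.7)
The plan is to reduce the lemma to player-wise applications of Pontryagin's Minimum Principle. By the definition of an OLNE (\cite[p.~266]{Basar.1999}), for each player $i\in\mathcal{P}$, the control $\bm{u}_i^*(\cdot)$ is a minimizer of the individual optimal control problem obtained from \eqref{eq:dynamic_system} and \eqref{eq:cost_function} when the opponents' strategies $\bm{u}_{\neg i}^*(\cdot)$ are held fixed. This reformulation is the crucial step, because it allows me to invoke classical single-agent OC results on each player's subproblem and then recombine them into the joint system \eqref{eq:OLNE_1}--\eqref{eq:OLNE_4}.

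For the necessity direction, I would apply the Minimum Principle to each frozen-opponent subproblem. This immediately yields a costate $\bm{\psi}_i(\cdot)$ satisfying the adjoint ODE \eqref{eq:OLNE_3} with terminal condition \eqref{eq:OLNE_4}, and the pointwise minimum condition $\bm{u}_i^*(t) \in \arg\min_{\bm{u}_i} H_i(\bm{\psi}_i(t), \bm{x}^*(t), \bm{u}_i, \bm{u}_{\neg i}^*(t))$. Because $g_i(\bm{x},\bm{u}_i) = q_i(\bm{x}) + \bm{u}_i^\top \bm{R}_i \bm{u}_i$ with $\bm{R}_i \succ 0$ and because $\bm{f}$ is affine in $\bm{u}_i$ by \eqref{eq:dynamic_system}, the Hamiltonian $H_i$ is strictly convex in $\bm{u}_i$, so this pointwise minimum is characterized by the first-order stationarity condition \eqref{eq:OLNE_2}. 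Equation \eqref{eq:OLNE_1} is just the state equation restated at the optimum.

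For the sufficiency direction, I would use the standard convex variational argument. Fix $i$ and any admissible $\bm{u}_i(\cdot)\in\Gamma_i$ producing state $\bm{x}(\cdot)$ under $(\bm{u}_i, \bm{u}_{\neg i}^*)$. Writing $\Delta J_i = J_i(\bm{u}_i,\bm{u}_{\neg i}^*) - J_i(\bm{u}_i^*,\bm{u}_{\neg i}^*)$, I would add and subtract $\int_0^T \bm{\psi}_i^\top(\bm{f}(\bm{x},\bm{u}_i,\bm{u}_{\neg i}^*) - \dot{\bm{x}})\,\mathrm{d}t = 0$, integrate by parts using \eqref{eq:OLNE_3} and the transversality condition \eqref{eq:OLNE_4}, and then exploit the assumed convexity of $H_i$ in $\bm{x}$ and of $h_i$ in $\bm{x}$ together with strict convexity of $H_i$ in $\bm{u}_i$ and the stationarity condition \eqref{eq:OLNE_2}, to obtain $\Delta J_i \geq 0$. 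Since this holds for every $i$ and every admissible unilateral deviation, $\bm{u}^*(\cdot)$ is an OLNE.

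The main obstacle I anticipate is the sufficiency part, specifically carrying out the convex majorization cleanly: one has to combine the two convexity assumptions (in $\bm{x}$ on $H_i$ and $h_i$, in $\bm{u}_i$ via $\bm{R}_i \succ 0$) with the boundary terms produced by integration by parts so that the adjoint equation cancels the state-dependent first-order terms and \eqref{eq:OLNE_2} cancels the control-dependent ones, leaving a manifestly nonnegative remainder. The necessity part is essentially bookkeeping once the reduction to $N$ decoupled Pontryagin problems is made, but writing out the sufficiency inequality requires care because the comparison trajectory $\bm{x}(\cdot)$ differs from $\bm{x}^*(\cdot)$ while $\bm{\psi}_i(\cdot)$ is constructed along $\bm{x}^*(\cdot)$.
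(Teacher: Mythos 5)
Your necessity half is sound and is essentially the content of the result the paper cites: by definition of an OLNE, each $\bm{u}_i^*(\cdot)$ solves the optimal control problem obtained by freezing the opponents' open-loop controls $\bm{u}_{\neg i}^*(\cdot)$; Pontryagin's principle for this free-endpoint problem gives \eqref{eq:OLNE_3}, \eqref{eq:OLNE_4} and pointwise minimization of $H_i$ over $\bm{u}_i$, and since $H_i$ is a strictly convex quadratic in $\bm{u}_i$ (input-affine dynamics, $\bm{R}_i$ positive definite) with unconstrained controls, the minimization is equivalent to the stationarity condition \eqref{eq:OLNE_2}. Be aware, though, that the paper itself proves none of this: its proof of Lemma~\ref{lemma:OLNE} is a citation of \cite[Theorem~6.11]{Basar.1999} for necessity and \cite[Theorem~3.2]{Dockner.2000} for sufficiency, so you are in effect re-deriving the cited theorems rather than paralleling the paper's argument.

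The genuine gap is in your sufficiency half, at exactly the step you flag as the main obstacle. After adding $\int_0^T\bm{\psi}_i^\top(\bm{f}-\dot{\bm{x}})\,\mathrm{d}t=0$ and integrating by parts, the pointwise inequality you need is $H_i(\bm{\psi}_i,\bm{x},\bm{u}_i,\bm{u}_{\neg i}^*)-H_i(\bm{\psi}_i,\bm{x}^*,\bm{u}^*)\geq \nabla_{\bm{x}}H_i\vert_*^\top(\bm{x}-\bm{x}^*)+\nabla_{\bm{u}_i}H_i\vert_*^\top(\bm{u}_i-\bm{u}_i^*)$, i.e. the gradient inequality for convexity of $H_i$ \emph{jointly} in $(\bm{x},\bm{u}_i)$ (Mangasarian), or alternatively convexity in $\bm{x}$ of the Hamiltonian minimized over $\bm{u}_i$ (Arrow). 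The combination you invoke, convexity in $\bm{x}$ for fixed $\bm{u}$ plus strict convexity in $\bm{u}_i$ for fixed $\bm{x}$, does not imply it: the term $\bm{\psi}_i^\top\bm{G}_i(\bm{x})\bm{u}_i$ is bilinear whenever $\bm{G}_i$ depends on $\bm{x}$, and already for the scalar model term $H=xu+u^2$ one finds $H(x,u)-H(x^*,u^*)-H_x^*(x-x^*)-H_u^*(u-u^*)=(x-x^*)(u-u^*)+(u-u^*)^2$, which is negative for suitable deviations. Splitting the Hamiltonian difference into an $\bm{x}$-step and a $\bm{u}_i$-step does not rescue the argument either, since one of the two gradients is then evaluated at the perturbed point, and neither the adjoint equation \eqref{eq:OLNE_3} nor the stationarity condition \eqref{eq:OLNE_2} cancels it. To close the proof you must either strengthen the hypothesis to joint convexity of $H_i$ in $(\bm{x},\bm{u}_i)$ (automatic when $\bm{G}_i$ is state-independent, as in the paper's examples, because the cross term is then linear in each variable separately and independent of $\bm{x}$), work with Arrow's condition on the minimized Hamiltonian, or simply invoke \cite[Theorem~3.2]{Dockner.2000} as the paper does, whose hypotheses include precisely such a condition.
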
   
\begin{proof}
	Necessity of the conditions \eqref{eq:OLNE_1}, \eqref{eq:OLNE_2}, \eqref{eq:OLNE_3} and \eqref{eq:OLNE_4} for an OLNE follows from \cite[Theorem~6.11]{Basar.1999} and sufficiency from \cite[Theorem~3.2]{Dockner.2000}\tdd{check theorem}.
\end{proof}

% Eqs.~\eqref{eq:OLNE_1}, \eqref{eq:OLNE_2}, \eqref{eq:OLNE_3} and \eqref{eq:OLNE_4} constitute a two-point boundary value problem (TPBVP) with $(N+1)n$ differential equations, $n$ boundary conditions at $t_0$ and $Nn$ at $T$. Thus, depending on the concrete application\comment{no solution possible and numerical methods can yield local solutions}, an OLNE can be found by using numerical methods such as shooting techniques or collocation formulas\footnote{Although the TPBVP provides necessary and sufficient conditions for our DG formulation, it can still possess no, one or several solutions\comment{see collision avoidance example later; for the LQ case, uniqueness and existence conditions can be established due to the Riccati equations}.}. 

In order to define and solve the inverse problem to the introduced DG, a common approach is to rewrite $h_i$ and $g_i$ in \eqref{eq:cost_function} as linear combination of basis functions: $h_i(\bm{x}) = \bm{\zeta}_i^\top \bm{\lambda}_i(\bm{x})$ and $g_i(\bm{x},\bm{u}_i) = \bm{\eta}_i^\top \bm{\mu}_i(\bm{x},\bm{u}_i)$. This yields
\begin{align} \label{eq:cost_function_basisFunctions}
	J_i \widehat{=} \int_{0}^{T} \bm{\theta}_i^\top \bm{\phi}_i\left( \bm{x}, \bm{u} \right) \text{d}t
\end{align}
with $\bm{\theta}_i^\top = \mat{ \bm{\eta}_i^\top & \bm{\zeta}_i^\top } \in \mathbb{R}^{M_i}$ and $\bm{\phi}_i^\top = \mat{ \bm{\mu}_i^\top & \dot{\bm{x}}^\top \pdv{\bm{\lambda}_i}{\bm{x}}^\top }$. 

Now, Assumption~\ref{assumption:GT_data} is the typical assumption (see e.g. \cite[Definition~6.1]{Molloy.2022}, \cite[Assumption~1]{Molloy.2020} or \cite[Assumption~4.2]{Inga.2020}) made on the GT data to define the IDG Problem~\ref{problem:IDG} for the IDG methods based on the MP.
\begin{assumption} \label{assumption:GT_data}
	Let observed control $\tilde{\bm{u}}_i(\cdot), \forall i \in \mathcal{P}$ and state trajectories $\tilde{\bm{x}}(\cdot)$ be given (so-called GT data). Moreover, let these observed trajectories constitute an OLNE of the DG defined by \eqref{eq:dynamic_system} and \eqref{eq:cost_function_basisFunctions} with $\bm{\theta}_i^*, \forall i \in \mathcal{P}$: $\tilde{\bm{u}}_i(\cdot) = \bm{u}_i^*(\cdot), \forall i \in \mathcal{P}$ and $\tilde{\bm{x}}(\cdot) = \bm{x}^*(\cdot)$.
\end{assumption}
\begin{problem} \label{problem:IDG}
	Let Assumption~\ref{assumption:GT_data} hold.
	Find (at least) one non-trivial parameter vector $\hat{\bm{\theta}}_i, \forall i \in \mathcal{P}$ such that
	\begin{align} \label{eq:IDG_problem}
		\begin{aligned}
			\tilde{\bm{u}}_i(\cdot) &= \arg \min_{\bm{u}_i(\cdot)} \int_{0}^{T} \hat{\bm{\theta}}_i^\top \bm{\phi}_i(\bm{x},\bm{u}) \text{d}t, \quad \forall i \in \mathcal{P} \\
			\text{w.r.t.} &\hphantom{=} \dot{\bm{x}} = \bm{f}(\bm{x},\bm{u}), \bm{x}(0)=\bm{x}_0.
		\end{aligned}
	\end{align}
\end{problem}

Due to Assumption~\ref{assumption:GT_data}, the trajectories $\tilde{\bm{u}}(\cdot)$ and $\tilde{\bm{x}}(\cdot)$ fulfill the optimality conditions~\eqref{eq:OLNE_1}, \eqref{eq:OLNE_2}, \eqref{eq:OLNE_3} and \eqref{eq:OLNE_4} with $\bm{\theta}_i^*, \forall i \in \mathcal{P}$. Since the parameters $\bm{\theta}_i^*$ are unknown in case of Problem~\ref{problem:IDG}, the aim is to find $\hat{\bm{\theta}}_i, \forall i \in \mathcal{P}$ such that \eqref{eq:OLNE_2} and \eqref{eq:OLNE_3} are satisfied. This can be done by minimizing the extent to which these OLNE conditions are violated. This extent can be formulated by defining the residual functions $r_{C,i}(\bm{\theta}_i,\bm{\psi}_i) = \norm{\nabla_{\bm{u}_i}H_i(\bm{\theta}_i,\bm{\psi}_i)}_2^2$ and $r_{L,i}(\bm{\theta}_i,\bm{\psi}_i) = \norm{\dot{\bm{\psi}}_i+\nabla_{\bm{x}}H_i(\bm{\theta}_i,\bm{\psi}_i)}_2^2$ for each player $i \in \mathcal{P}$. Minimizing the so-called residual error~$\delta_{R,i}$ w.r.t. $\bm{\theta}_i$,
\begin{align} \label{eq:residual_error_opt}
	\min_{\bm{\theta}_i,\bm{\psi}_i(\cdot)} \underbrace{\int_{0}^{T} r_{C,i}(\bm{\theta}_i,\bm{\psi}_i) + r_{L,i}(\bm{\theta}_i,\bm{\psi}_i) \text{d}t}_{\delta_{R,i}(\bm{\theta}_i,\bm{\psi}_i)},
\end{align}
yields $\hat{\bm{\theta}}_i$. According to \cite[Lemma~4.2]{Inga.2020}, \eqref{eq:residual_error_opt} can be rewritten as a Quadratic Program (QP), which can be solved independently for each player $i \in \mathcal{P}$.
\begin{lemma} \label{lemma:residual_error_QP}
	The residual error optimization~\eqref{eq:residual_error_opt} is solved by $\hat{\bm{\theta}}_i$ and $\hat{\bm{\psi}}_i(\cdot)$ if and only if $\hat{\bm{\alpha}}_i^\top = \mat{ \hat{\bm{\theta}}_i^\top & \hat{\bm{\psi}}^\top_i(0) }$ solves
	\begin{align} \label{eq:residual_error_QP}
		\min_{\bm{\alpha}_i} \underbrace{\bm{\alpha}_i^\top \bm{P}_i(0) \bm{\alpha}_i}_{\delta_{R,i}(\bm{\alpha}_i)},
	\end{align}
	where $\bm{P}_i(0)$ is symmetric, positive semidefinite and follows from the Riccati differential equation
	\begin{align} \label{eq:residual_error_RiccatiEq}
		\dot{\bm{P}}_i(t) = \left( \bm{P}_i(t)\bm{B}_i \! + \! \bm{N}_i(t) \right)\left( \bm{P}_i(t)\bm{B}_i \! + \! \bm{N}_i(t) \right)^\top \!\! - \! \bm{F}^\top_i(t)\bm{F}_i(t)
	\end{align}
	with $\bm{P}_i(T)=\bm{0}$, $\bm{B}_i = \mat{ \bm{0}_{n\times M_i} & \bm{I}_{n} }^\top$, $\bm{N}^\top_i(t) = \mat{ \pdv{\bm{\phi}_i(t)}{\bm{x}}^\top & \pdv{\bm{f}(t)}{\bm{x}}^\top }$ and
	\begin{align} \label{eq:F_i}
		\bm{F}_i(t) = \mat{ \pdv{\bm{\phi}_i(t)}{\bm{x}}^\top & \pdv{\bm{f}(t)}{\bm{x}}^\top \\ \pdv{\bm{\phi}_i(t)}{\bm{u}_i}^\top & \pdv{\bm{f}(t)}{\bm{u}_i}^\top }.
	\end{align}
\end{lemma}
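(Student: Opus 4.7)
The plan is to recognize~\eqref{eq:residual_error_opt} as a free-endpoint linear-quadratic variational problem in the augmented variable $\bm{\alpha}_i(t) = [\bm{\theta}_i^\top,\bm{\psi}_i(t)^\top]^\top$ and then identify the cost-to-go by dynamic programming. First I would expand the residual integrand: since $H_i = \bm{\theta}_i^\top \bm{\phi}_i + \bm{\psi}_i^\top \bm{f}$, direct differentiation gives $\nabla_{\bm{x}} H_i = \bm{N}_i^\top(t)\bm{\alpha}_i(t)$, while the upper and lower blocks of $\bm{F}_i(t)\bm{\alpha}_i(t)$ equal $\nabla_{\bm{x}} H_i$ and $\nabla_{\bm{u}_i} H_i$ respectively, so $\|\nabla_{\bm{x}} H_i\|_2^2+\|\nabla_{\bm{u}_i}H_i\|_2^2 = \bm{\alpha}_i^\top \bm{F}_i^\top \bm{F}_i \bm{\alpha}_i$. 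Expanding the square in $r_{L,i}$ then yields
\begin{equation*}
r_{C,i}+r_{L,i} \;=\; \bm{\alpha}_i^\top \bm{F}_i^\top \bm{F}_i \bm{\alpha}_i + 2\dot{\bm{\psi}}_i^\top \bm{N}_i^\top \bm{\alpha}_i + \dot{\bm{\psi}}_i^\top \dot{\bm{\psi}}_i .
\end{equation*}
Because $\bm{\theta}_i$ is time-invariant, $\dot{\bm{\alpha}}_i = \bm{B}_i \dot{\bm{\psi}}_i$, so $\bm{\alpha}_i$ can be viewed as the state and $\dot{\bm{\psi}}_i$ as an unconstrained control of a linear system with free initial condition and no terminal cost.

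Next I would invoke the HJB equation with the ansatz $V(\bm{\alpha}_i,t) = \bm{\alpha}_i^\top \bm{P}_i(t)\bm{\alpha}_i$ and terminal condition $\bm{P}_i(T) = \bm{0}$. Pointwise minimization of the auxiliary Hamiltonian $r_{C,i}+r_{L,i}+2\bm{\alpha}_i^\top\bm{P}_i\bm{B}_i\dot{\bm{\psi}}_i$ over $\dot{\bm{\psi}}_i$ gives the linear feedback $\dot{\bm{\psi}}_i^\star = -(\bm{B}_i^\top \bm{P}_i + \bm{N}_i^\top)\bm{\alpha}_i$; substituting $\dot{\bm{\psi}}_i^\star$ back and requiring the HJB equation to hold for every $\bm{\alpha}_i$ produces exactly~\eqref{eq:residual_error_RiccatiEq}. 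Symmetry of $\bm{P}_i(t)$ is automatic from the symmetric right-hand side and terminal condition, while positive semidefiniteness follows because $V$ is the infimum of a nonnegative integrand.

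The cost-to-go from $t=0$ is $V(\bm{\alpha}_i(0),0) = \bm{\alpha}_i^\top \bm{P}_i(0)\bm{\alpha}_i$ with the initial value $\bm{\alpha}_i = [\bm{\theta}_i^\top,\bm{\psi}_i(0)^\top]^\top$ still free, so minimization over $\bm{\alpha}_i$ yields~\eqref{eq:residual_error_QP}. The iff would then be closed in both directions: a minimizer $\hat{\bm{\alpha}}_i$ of the QP together with the costate trajectory obtained by forward integration of $\dot{\bm{\psi}}_i^\star$ attains the infimum of $\delta_{R,i}$, and conversely any minimizing pair of $\delta_{R,i}$ must obey the same feedback law, so its initial values minimize~\eqref{eq:residual_error_QP}.

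The hard part will be setting up the dynamic-programming step cleanly in the presence of the frozen block $\bm{\theta}_i$ inside $\bm{\alpha}_i$ and of the free initial state: the verification theorem must treat $\bm{\alpha}_i$ as the full state, the control $\dot{\bm{\psi}}_i$ as acting only through $\bm{B}_i$, and the initial point as a decision variable rather than data. A secondary, but routine, technical point is establishing global existence and symmetry of $\bm{P}_i(\cdot)$ on $[0,T]$, which follows from the standard finite-horizon Riccati theory because $\bm{F}_i^\top\bm{F}_i$ is symmetric positive semidefinite and the quadratic control penalty is the identity.
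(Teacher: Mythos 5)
Your derivation is correct: the identification $\nabla_{\bm{x}}H_i=\bm{N}_i^\top\bm{\alpha}_i$, the block reading of $\bm{F}_i\bm{\alpha}_i$, the expansion $r_{C,i}+r_{L,i}=\bm{\alpha}_i^\top\bm{F}_i^\top\bm{F}_i\bm{\alpha}_i+2\dot{\bm{\psi}}_i^\top\bm{N}_i^\top\bm{\alpha}_i+\dot{\bm{\psi}}_i^\top\dot{\bm{\psi}}_i$, the augmented dynamics $\dot{\bm{\alpha}}_i=\bm{B}_i\dot{\bm{\psi}}_i$, and the HJB step with the quadratic ansatz do reproduce exactly \eqref{eq:residual_error_RiccatiEq} with $\bm{P}_i(T)=\bm{0}$, and the free initial state gives \eqref{eq:residual_error_QP}. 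The paper itself, however, gives no derivation at all: its proof is a one-line citation of Lemmas 4.1 and 4.2 of Inga et al., which formulate the residual minimization as exactly this kind of linear-quadratic problem in $(\bm{\theta}_i,\bm{\psi}_i(0))$ and sweep it into a Riccati equation. So your proposal is a self-contained verification-theorem route to the same result; what it buys is transparency about where symmetry and positive semidefiniteness of $\bm{P}_i(0)$ come from (value of a nonnegative cost with symmetric terminal data) and about the structural trick of freezing $\bm{\theta}_i$ inside the augmented state, at the price of having to handle the technical points you flag yourself. Two of these deserve explicit care if written out: (i) the "only if" direction is cleanest not via "must obey the feedback law" but via the sandwich $\delta_{R,i}(\bm{\theta}_i,\bm{\psi}_i(\cdot))\geq\bm{\alpha}_i(0)^\top\bm{P}_i(0)\bm{\alpha}_i(0)\geq\min_{\bm{\alpha}_i}\bm{\alpha}_i^\top\bm{P}_i(0)\bm{\alpha}_i$, with attainability forcing equalities throughout; (ii) global existence of $\bm{P}_i$ on $[0,T]$ follows because the joint integrand is a positive semidefinite quadratic form in $(\bm{\alpha}_i,\dot{\bm{\psi}}_i)$ (it is a sum of squares), so the value function is finite and quadratic, ruling out finite escape of the backward Riccati flow; and the "if" direction must pair the QP minimizer with the costate trajectory generated by the optimal feedback, as you correctly state.
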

\begin{proof}
	The result follows from \cite[Lemma~4.1]{Inga.2020} and \cite[Lemma~4.2]{Inga.2020}.
\end{proof}

By solving the convex QP~\eqref{eq:residual_error_QP} with additional constraints, e.g. $\theta_{i,1}=1$ to avoid the trivial solution, for each player $i \in \mathcal{P}$, we find a global minimizer $\hat{\bm{\theta}}^{R}{}^{\top} = \mat{ \hat{\bm{\theta}}^R_1{}^{\top} & \dots & \hat{\bm{\theta}}^R_N{}^{\top} }$ of the residual error~$\delta_{R}(\bm{\alpha}) = \sum_{i=1}^{N} \delta_{R,i}(\bm{\alpha}_i)$~\eqref{eq:residual_error_opt} ($\bm{\alpha}^\top = \mat{ \bm{\alpha}_1^\top & \dots & \bm{\alpha}_N^\top }$). However, Problem~\ref{problem:IDG} aims at finding a global minimizer $\hat{\bm{\theta}}^{T}$ of a trajectory error measure $\delta_{T}(\bm{\theta})=\delta_{T}^{\bm{x}}(\bm{\theta})+\delta_{T}^{\bm{u}}(\bm{\theta})$ between the GT $\tilde{\bm{u}}(\cdot)$ and $\tilde{\bm{x}}(\cdot)$ and the estimated trajectories $\hat{\bm{u}}(\cdot)$ and $\hat{\bm{x}}(\cdot)$ such that $\delta_{T}(\hat{\bm{\theta}}^T) = 0$. The trajectory error can for example be quantified by the normalized sum of absolute errors (NSAE)\footnote{For the NSAE, the trajectories are evaluated at $K$ points in time $t_{k} \in [0,T]$.}:
\begin{align}
	\delta_{T}^{\bm{x}}(\bm{\theta}) &= \sum_{j=1}^{n} \frac{1}{\max_{k} \abs{\tilde{x}_j^{(k)}}} \sum_{k=1}^{K} \abs{ \tilde{x}_j^{(k)} - x_j^{(k)} \big \vert_{\bm{\theta}} }, \label{eq:trajectory_error_x} \\
	\delta_{T}^{\bm{u}}(\bm{\theta}) &= \sum_{i=1}^{N} \sum_{j=1}^{m_i} \frac{1}{\max_{k} \abs{\tilde{u}_{i,j}^{(k)}}} \sum_{k=1}^{K} \abs{ \tilde{u}_{i,j}^{(k)} - u_{i,j}^{(k)} \big \vert_{\bm{\theta}} }, \label{eq:trajectory_error_u}
\end{align}
where $\bm{u}(\cdot) \big \vert_{\bm{\theta}}$ is the OLNE of the DG~\eqref{eq:dynamic_system}, \eqref{eq:cost_function_basisFunctions} with $\bm{\theta}$ and $\bm{x}(\cdot) \big \vert_{\bm{\theta}}$ the corresponding state trajectory.
\comment{compared to indirect IDG method, direct/bi-level-based method directly minimizes this trajectory error and yield $\hat{\bm{\theta}}^T$ if the non-convexity of the objective function is handled}

In references on indirect IDG methods, often a formal statement about the connection of the residual~$\hat{\bm{\theta}}^R$ and the trajectory error minimizer~$\hat{\bm{\theta}}^T$ is missing. Lemma~\ref{lemma:sufficient_conditions_trustworthiness} restates sufficient conditions for a unique solution of the residual parameter optimizations and clarifies that these conditions are sufficient for $\delta_T(\hat{\bm{\theta}}^R)=0$.
%Now, Lemma~\ref{lemma:sufficient_conditions_trustworthiness} yields sufficient conditions such that a global minimizer $\hat{\bm{\theta}}$ of the residual error~$\delta_{R}(\bm{\alpha}) = \sum_{i=1}^{N} \delta_{R,i}(\bm{\alpha}_i)$~\eqref{eq:residual_error_opt} solves Problem~\ref{problem:IDG} and thus, yields the global minimum trajectory error~$\delta_{T}(\hat{\bm{\theta}})=\delta_{T}^{\bm{x}}(\hat{\bm{\theta}})+\delta_{T}^{\bm{u}}(\hat{\bm{\theta}}) = 0$.
\begin{lemma} \label{lemma:sufficient_conditions_trustworthiness}
	Let $\bar{\bm{P}}_i$ follow from $\bm{P}_i(0)$ by deleting the first row and column. Furthermore, let $\bar{\bm{p}}_i$ be the first column of $\bm{P}_i(0)$ without the first element and $\bar{\bm{P}}^+_i$ the pseudoinverse of $\bar{\bm{P}}_i$. Let the singular value decomposition of $\bar{\bm{P}}_i$ be given by $\bar{\bm{P}}_i = \bm{U}_i \bm{\Sigma}_i \bm{U}_i^\top$ with
	\begin{align} \label{eq:U_i}
		\bm{U}_i = \mat{ \bm{U}_i^{11} & \bm{U}_i^{12} \\ \bm{U}_i^{21} & \bm{U}_i^{22} },
	\end{align}
	where $\bm{U}_i^{11} \in \mathbb{R}^{(M_i-1)\times r_i}$, $\bm{U}_i^{12} \in \mathbb{R}^{(M_i-1)\times (M_i+n-1-r_i)}$, $\bm{U}_i^{21} \in \mathbb{R}^{n\times r_i}$, $\bm{U}_i^{22} \in \mathbb{R}^{n\times (M_i+n-1-r_i)}$ and $r_i$ is the rank of $\bar{\bm{P}}_i$.
	If Assumption~\ref{assumption:GT_data} holds and $r_i=M_i+n-1$ or $\bm{U}_i^{12}=\bm{0}$ holds $\forall i \in \mathcal{P}$, $\hat{\bm{\theta}}^R_i = c_i \bm{\theta}_i^*$, $\forall i \in \mathcal{P}$ with $c_i \in \mathbb{R}_{>0}$ and hence, the parameters $\hat{\bm{\theta}}^R_i$ solve Problem~\ref{problem:IDG}, i.e. $\delta_T(\hat{\bm{\theta}}^R)=0$.
\end{lemma}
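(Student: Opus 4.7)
\emph{Proof plan.} My plan is to first pass from Assumption~\ref{assumption:GT_data} to a kernel condition on $\bm{P}_i(0)$, then to reduce the normalized QP to a linear system in $\bar{\bm{P}}_i$, and finally to dispatch the two alternative conditions using the SVD block structure. By Assumption~\ref{assumption:GT_data} and Lemma~\ref{lemma:OLNE}, there exist costate trajectories $\bm{\psi}_i^*(\cdot)$ such that \eqref{eq:OLNE_2} and \eqref{eq:OLNE_3} hold pointwise for $\tilde{\bm{u}}$, $\tilde{\bm{x}}$ and $\bm{\theta}_i^*$, so the two residual integrands vanish identically and $\delta_{R,i}(\bm{\alpha}_i^*)=0$ for $\bm{\alpha}_i^{*\top} = \mat{\bm{\theta}_i^{*\top} & \bm{\psi}_i^*(0)^\top}$. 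Since $\bm{P}_i(0)$ is PSD by Lemma~\ref{lemma:residual_error_QP}, this forces $\bm{\alpha}_i^* \in \ker\bm{P}_i(0)$ and, more generally, identifies the global minimizer set of the unconstrained~\eqref{eq:residual_error_QP} with $\ker\bm{P}_i(0)$.

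Next, I would incorporate the normalization $\theta_{i,1}=1$ via a Schur-type partitioning. Writing $\bm{P}_i(0) = \mat{p_{i,11} & \bar{\bm{p}}_i^\top \\ \bar{\bm{p}}_i & \bar{\bm{P}}_i}$ and $\bm{\alpha}_i^\top = \mat{1 & \bar{\bm{\alpha}}_i^\top}$, the first-order condition of the normalized QP reduces to $\bar{\bm{P}}_i\bar{\bm{\alpha}}_i = -\bar{\bm{p}}_i$. Under the standard sign convention $\theta_{i,1}^*>0$ that makes the normalization $\theta_{i,1}=1$ trajectory-preserving, $\bar{\bm{\alpha}}_i^*/\theta_{i,1}^*$ is a particular solution, because the lower block of $\bm{P}_i(0)\bm{\alpha}_i^*=\bm{0}$ reads $\theta_{i,1}^*\bar{\bm{p}}_i + \bar{\bm{P}}_i\bar{\bm{\alpha}}_i^*=\bm{0}$. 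Hence the set of minimizers of the normalized QP is the affine subspace $\bar{\bm{\alpha}}_i^*/\theta_{i,1}^* + \ker\bar{\bm{P}}_i$.

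I would then read off the two sufficient conditions from the SVD. Since $\bar{\bm{P}}_i$ is PSD, its SVD coincides with its eigendecomposition, and the columns of $\mat{\bm{U}_i^{12} \\ \bm{U}_i^{22}}$ form an orthonormal basis of $\ker\bar{\bm{P}}_i$. If $r_i = M_i+n-1$, this kernel is trivial, so the minimizer is unique. If $\bm{U}_i^{12} = \bm{0}$, every kernel vector has zero in its first $M_i-1$ entries, so the $\bm{\theta}$-block of any minimizer is still uniquely determined (only $\bm{\psi}_i(0)$ may remain ambiguous). In both cases $\hat{\bm{\theta}}_i^R = \bm{\theta}_i^*/\theta_{i,1}^* = c_i\bm{\theta}_i^*$ with $c_i>0$. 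I would close by invoking the standard invariance of an OLNE of~\eqref{eq:dynamic_system}, \eqref{eq:cost_function_basisFunctions} under strictly positive per-player rescalings of $\bm{\theta}_i$ to conclude that the trajectories induced by $\hat{\bm{\theta}}^R$ coincide with the GT trajectories, yielding $\delta_T(\hat{\bm{\theta}}^R)=0$.

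The main obstacle I anticipate is precisely the SVD bookkeeping in the last step: a priori a minimizer of the normalized QP can differ from $\bar{\bm{\alpha}}_i^*/\theta_{i,1}^*$ by any element of $\ker\bar{\bm{P}}_i$, and only the two stated conditions guarantee that such a perturbation does not contaminate the $\bm{\theta}$-block. A secondary subtlety is the positivity of $c_i$, which implicitly relies on $\theta_{i,1}^*>0$; without this sign condition the normalization $\theta_{i,1}=1$ could force a negative scaling and the scaling-invariance argument for the OLNE would fail.
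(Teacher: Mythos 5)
Your proposal is correct, and it follows the same overall skeleton as the paper's proof: (i) Assumption~\ref{assumption:GT_data} plus necessity of the conditions in Lemma~\ref{lemma:OLNE} makes $(\bm{\theta}_i^*,\bm{\psi}_i^*(0))$ a zero-residual global minimizer of \eqref{eq:residual_error_QP}; (ii) under $r_i=M_i+n-1$ or $\bm{U}_i^{12}=\bm{0}$ the $\bm{\theta}$-part of any minimizer is unique up to a positive scaling; (iii) positive per-player rescaling leaves the OLNE unchanged, so $\delta_T(\hat{\bm{\theta}}^R)=0$. The genuine difference is in step (ii): the paper simply cites \cite[Theorem~2]{Molloy.2020} for uniqueness up to scaling, whereas you reprove it from scratch by partitioning $\bm{P}_i(0)=\bigl[\begin{smallmatrix} p_{i,11} & \bar{\bm{p}}_i^\top\\ \bar{\bm{p}}_i & \bar{\bm{P}}_i\end{smallmatrix}\bigr]$, reducing the normalized QP to the normal equation $\bar{\bm{P}}_i\bar{\bm{\alpha}}_i=-\bar{\bm{p}}_i$ (whose solution set is a particular solution plus $\ker\bar{\bm{P}}_i$), and then reading off from the SVD that either the kernel is trivial ($r_i=M_i+n-1$) or it cannot perturb the first $M_i-1$ coordinates ($\bm{U}_i^{12}=\bm{0}$), so only the costate block can remain ambiguous. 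This buys self-containedness and makes explicit two things the paper leaves implicit: why $\bar{\bm{p}}_i$ and $\bar{\bm{P}}_i^+$ appear in the lemma's hypotheses (they parameterize the particular solution of the normal equations), and the sign convention $\theta_{i,1}^*>0$ needed so that the normalization $\theta_{i,1}=1$ yields $c_i>0$ and the scaling-invariance argument for the OLNE goes through; the paper's citation-based route is shorter and defers exactly this bookkeeping to the literature. The only loose end in your write-up is minor: you invoke Lemma~\ref{lemma:residual_error_QP} to pass from the vanishing of the residual functional at $(\bm{\theta}_i^*,\bm{\psi}_i^*(\cdot))$ to $\bm{\alpha}_i^*\in\ker\bm{P}_i(0)$, which strictly uses that $\bm{\alpha}_i^\top\bm{P}_i(0)\bm{\alpha}_i$ is the optimal value of the inner minimization over costate trajectories with fixed $\bm{\psi}_i(0)$ (so it is squeezed between $0$ and the vanishing functional value); this is exactly how \cite{Inga.2020} derives the QP and is the same level of rigor the paper itself adopts, so it is not a gap.
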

\begin{proof}
	From \cite[Theorem~2]{Molloy.2020}, all solutions~$\hat{\bm{\theta}}^R_i$ of \eqref{eq:residual_error_QP} are unique up to a scaling factor $c_i \in \mathbb{R}_{>0}$ if $r_i=M_i+n-1$ or $\bm{U}_i^{12}=\bm{0}$. Furthermore, since Assumption~\ref{assumption:GT_data} holds and the MP conditions are necessary, for all global minimizers of \eqref{eq:residual_error_QP} $\delta_{R}=0$ holds and $\bm{\theta}_i^*$ is one of them. Thus, we find a $c_i$ such that $\hat{\bm{\theta}}^R_i = c_i \bm{\theta}_i^*$. Lastly, all parameters $\hat{\bm{\theta}}^R_i =  c_i\bm{\theta}_i^*, \forall i \in \mathcal{P}$ yield $\hat{\bm{u}}(\cdot) = \tilde{\bm{u}}(\cdot)$ and $\hat{\bm{x}}(\cdot) = \tilde{\bm{x}}(\cdot)$. 
\end{proof}
\comment{conditions sufficient for trustworthiness of $\delta_R$ with zero trajectory error; relaxed sufficient conditions for that possible, max rank for $\bm{P}_i(0)$ (excitation condition), solution to QP such that MP conditions are still necessary and (uniquely) sufficient and Assumption~\ref{assumption:GT_data}; then, solution set via global minima of QP}

As explained in Section~\ref{sec:introduction}, Assumption~\ref{assumption:GT_data} cannot be guaranteed in applications where the behavior of unknown agents needs to be identified. The parameterized cost function structures, i.e. the basis functions~$\bm{\phi}_i$ in \eqref{eq:cost_function_basisFunctions}, are unknown as well. In this case, the connection between the minimizers $\hat{\bm{\theta}}^R$ and $\hat{\bm{\theta}}^T$ is unclear so far but crucial for model validation and robust parameter identification with the MP-based IDG method. For example, if the global residual $\hat{\bm{\theta}}^R$ and the global trajectory error minimizers $\hat{\bm{\theta}}^T$ decouple\comment{at least one is different}, falsification of a model assumption would never be possible since $\delta_T(\hat{\bm{\theta}}^T)=0$ can be possible but $\hat{\bm{\theta}}^R$ always results in $\delta_T(\hat{\bm{\theta}}^R)>0$. Furthermore, if the minimizers of both error types differ\comment{at least one}, it is not guaranteed that by adapting the cost function structures or by using overparameterized ones $\delta_T(\hat{\bm{\theta}}^R)=0$ can be achieved although $\delta_T(\hat{\bm{\theta}}^T)=0$ holds. In addition, if Problem~\ref{problem:IDG} can only be solved approximately (normally the case in practice), i.e. $\delta_T(\hat{\bm{\theta}}^T)<\epsilon_T$, where $\epsilon_T$ is small enough for the concrete application, an approximate match between $\hat{\bm{\theta}}^R$ and $\hat{\bm{\theta}}^T$ is important for a robust parameter identification by the MP-based IDG method. Although in such cases an exact match $\hat{\bm{\theta}}^R = \hat{\bm{\theta}}^T$ is not possible, in $\norm{\hat{\bm{\theta}}^R - \hat{\bm{\theta}}^T}<\epsilon$, $\epsilon$ should be sufficiently small such that the application requirement $\delta_T(\hat{\bm{\theta}}^R)<\epsilon_T$ is met. 

In the next section, we formalize the connection between $\hat{\bm{\theta}}^R$ and $\hat{\bm{\theta}}^T$ by defining the so-called trustworthiness of the residual error~$\delta_{R}$ and give necessary conditions for it to discuss the applicability of the MP-based IDG method for model validation and robust parameter identification.

% Hence, in practice, we first need to postulate such a cost function structure and validate it based on measurement data by computing the best possible parameters $\hat{\bm{\theta}}$ regarding the trajectory error $\delta_T$ with an IOC method. If a cost function structure is found guaranteeing a sufficiently small $\delta_T$ ($\delta_T=0$ is unrealistic in most applications), an IOC method is needed that determines usable parameters for the cost function structure, i.e. the resulting trajectories yield a sufficiently small $\delta_T$ although $\delta_{T}=0$ is not possible. Remarkably, since direct IOC methods minimize the trajectory error to the GT data, e.g. $\delta_T$, they fulfill these characteristics by definition. For a residual-based IOC method, the trustworthiness of the residual error, e.g. $\delta_R$ for the MP-based method, is necessary which we define in the following section. Since the conditions in Lemma~\ref{lemma:sufficient_conditions_trustworthiness} are only sufficient for the trustworthiness of $\delta_{R}$, we provide necessary conditions to discuss if Assumption~\ref{assumption:GT_data}, i.e. knowing the cost function structure/basis functions, is really a necessary condition for it. Then, the applicability of the residual-based IOC method for model validation and estimation of usable parameters can be assessed.

%---%

\section{Necessary Conditions for the Trustworthiness of Optimality Condition Violation} \label{sec:optimalityConditionViolation}
\tdd{reformulate with $\epsilon = 0$ and the complete section only for MP}
\begin{definition}[Trustworthiness] \label{def:trustworthiness_residual_error}
	The residual error $\delta_{R}(\bm{\alpha})$ of the IDG method based on the MP is trustworthy if a global minimizer $\hat{\bm{\theta}}^{R}$ of $\delta_{R}(\bm{\alpha})$ is a global minimizer $\hat{\bm{\theta}}^{T}$ of $\delta_T(\bm{\theta})=\delta_T^{\bm{x}}(\bm{\theta})+\delta_T^{\bm{u}}(\bm{\theta})$.
	% : $\norm{\hat{\bm{\theta}}^{\text{MP}} - \hat{\bm{\theta}}^{\text{BL}}}<\epsilon$ with $\epsilon$ sufficiently small
	% \footnote{Suitable values for $\epsilon$ need to be defined for the concrete application. In practice, when Assumption~\ref{assumption:GT_data} does not hold, $\epsilon>0$ follows.}.
	\comment{due to the convex QP, the set of global minimizers of the residual error is non-empty - hence, the residual-based optimization finds in case of trustworthiness a subset of the global minimizers of the trajectory error; since $\delta_{T}$ can be computed at the global minimizers of $\delta_{R}$ the numerical values of the global minima of $\delta_R$ are not important}
%	\begin{enumerate}
%		\item A global minimizer $\hat{\bm{\theta}}$ of $\delta_{R}(\bm{\alpha})$ is (approximately) a global minimizer of $\delta_T(\bm{\theta})$ and vice versa.
%		\item If a global minimizer $\hat{\bm{\theta}}$ leads to $\delta_{R}(\hat{\bm{\alpha}})\leq\epsilon_R$ ($\epsilon_R\geq0$ sufficiently small), $\delta_{T}(\hat{\bm{\alpha}})\leq\epsilon_T$ ($\epsilon_T\geq0$ sufficiently small) and vice versa.
%	\end{enumerate}
\end{definition}

\begin{remark} 
	In Lemma~\ref{lemma:sufficient_conditions_trustworthiness} sufficient conditions for the trustworthiness in the sense of Definition~\ref{def:trustworthiness_residual_error} are provided.
\end{remark}

%\begin{itemize}
%	\item double integrator as academic example system
%	\item case "all conditions fulfilled" and no noise, plot residual vs. trajectory error
%	\item case "2nd condition not fulfilled" (in proof, after discussing first and third condition), plots residual vs. trajectory error and GT vs MP vs bi-level result (left subplot: states; right subplot controls)
%\end{itemize}

In order to illustrate the trustworthiness of the residual error, we first look at the case where the conditions of Lemma~\ref{lemma:sufficient_conditions_trustworthiness} hold. Let the OC problem of a single-player double integrator be defined by the dynamic system
\begin{align} \label{eq:sys_doubleInt}
	\dot{\bm{x}} = \mat{0 & 1 \\ 0 & 0}\bm{x} + \mat{0 \\ 1}u, \quad \bm{x}(0) = \mat{1 \\ -1}
\end{align}
and the cost function
\begin{align} \label{eq:cost_doubleInt}
	J_i = \int_{0}^{6} \mat{ 1 & 2 & 1 } \mat{u^2 & x_1^2 & x_2^2 }^\top \text{d}t.
\end{align}
Solving the OC problem
%\footnote{The OC and IOC calculations were performed in the Matlab environment throughout the paper.} 
given by \eqref{eq:sys_doubleInt}, \eqref{eq:cost_doubleInt} yields the GT trajectories $\tilde{\bm{x}}(\cdot)$ and $\tilde{u}(\cdot)$ that are optimal w.r.t. $\bm{\theta}^* = \mat{ 1 & 2 & 1 }^\top$. 
% By stating the QP of Lemma~\ref{lemma:residual_error_QP} with equality constraint $\theta_1=1$, the IOC problem (Problem~\ref{problem:IDG} with $N=1$) is tried to be solved. 
By evaluating the residual~$\delta_R(\bm{\alpha})$ and the trajectory error~$\delta_{T}(\bm{\theta})$ for varying $\theta_2$-values by setting $\theta_1=\theta^*_1$, $\theta_3=\theta_3^*$ and $\bm{\psi}(0)=\bm{\psi}^*(0)$, Fig.~\ref{fig:comparisonErrorsNoiseFree} results. Here, the residual error~$\delta_{R}$ is trustworthy in the sense of Definition~\ref{def:trustworthiness_residual_error}: the global minimizer of $\delta_{R}$ is also a global minimizer of $\delta_T$ and since for the global minimum trajectory error $\delta_{T}=0$ holds, they solve Problem~\ref{problem:IDG}.
\begin{figure}[t]
	\centering
	\includegraphics[width=2.5in]{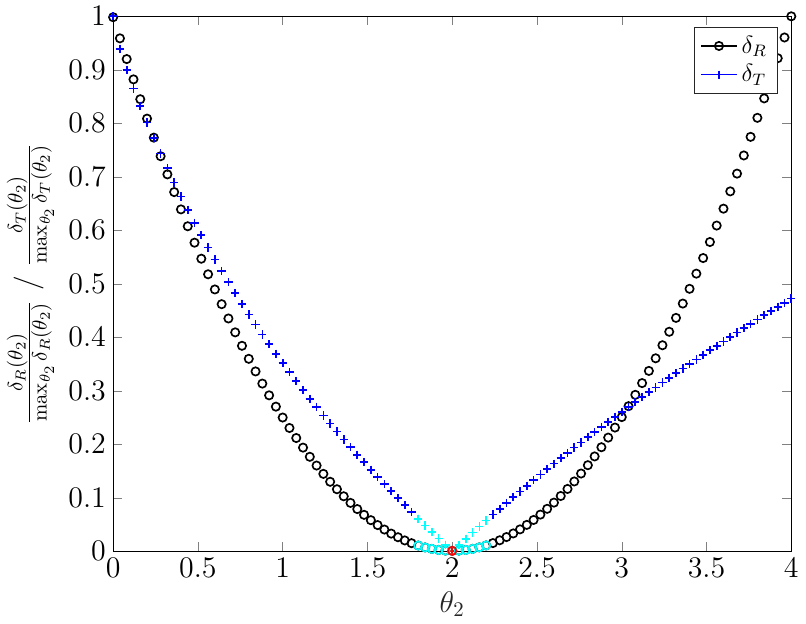}
	\caption{Normalized residual~$\delta_{R}$ and trajectory error~$\delta_{T}$ for the double integrator example system when $\delta_{R}$ is trustworthy. For both error types, the 10 smallest values are marked in cyan except the minimum values which are marked in red.}
	\label{fig:comparisonErrorsNoiseFree}
\end{figure}

In order to relax the conditions in Lemma~\ref{lemma:sufficient_conditions_trustworthiness} as much as possible to discuss the applicability of the MP-based IDG method, Proposition~\ref{prop:necessary_cond_trustworthiness} introduces necessary conditions for the trustworthiness of $\delta_{R}(\bm{\alpha})$.
\begin{proposition} \label{prop:necessary_cond_trustworthiness}
	If the residual error $\delta_{R}(\bm{\alpha})$ of the MP-based IDG method is trustworthy in the sense of Definition~\ref{def:trustworthiness_residual_error}, the following conditions hold:
	\begin{enumerate}
		\item The OLNE conditions of Lemma~\ref{lemma:OLNE} are necessary and sufficient\footnote{If there are more than one optimal solutions for a parameter vector $\bm{\theta}$ all these solutions need to be compared to the GT trajectories and the solution with the smallest trajectory error $\delta_T$ is used to define $\delta_T$.}.
		\item The basis functions~$\bm{\phi}_i,\forall i \in \mathcal{P}$, which yield $\delta_{T}=0$, are known.
		\item The trivial solution of the minimization of $\delta_{R}(\bm{\alpha})$ is omitted.
	\end{enumerate}
\end{proposition}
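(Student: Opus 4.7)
The plan is to argue each of the three conditions by contraposition, exhibiting in each case a scenario in which trustworthiness as defined in Definition~\ref{def:trustworthiness_residual_error} would fail if the condition were violated. I would start with condition 3, which is the most direct: inspection of $r_{C,i}$ and $r_{L,i}$ reveals that $\delta_{R}(\bm{0},\bm{0}) = 0$, so the trivial solution is always a global minimizer of $\delta_{R}$. If admitted as $\hat{\bm{\theta}}^R = \bm{0}$, the cost \eqref{eq:cost_function_basisFunctions} vanishes identically and the DG becomes ill-posed since every admissible trajectory trivially minimizes \eqref{eq:cost_function_basisFunctions}; hence $\delta_T(\bm{0})$ cannot be guaranteed to coincide with the global minimum of $\delta_T$ achieved by non-trivial parameters, and the trivial solution must therefore be excluded.

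For condition 1, I would split into two sub-cases. If the OLNE conditions of Lemma~\ref{lemma:OLNE} were only sufficient but not necessary, then a parameter $\bm{\theta}^*$ with $\delta_T(\bm{\theta}^*) = 0$ could exist for which the GT does not satisfy \eqref{eq:OLNE_2} and \eqref{eq:OLNE_3} for any costates, so $\delta_R(\bm{\theta}^*,\bm{\psi}) > 0$ for every $\bm{\psi}$; then the residual global minimizer $\hat{\bm{\theta}}^R \neq \bm{\theta}^*$, generically yielding $\delta_T(\hat{\bm{\theta}}^R) > 0 = \delta_T(\bm{\theta}^*)$ and contradicting trustworthiness. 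Dually, if the OLNE conditions were only necessary but not sufficient, some $\bm{\theta}$ and $\bm{\psi}$ could satisfy \eqref{eq:OLNE_2}, \eqref{eq:OLNE_3} with the GT substituted in, yielding $\delta_R(\bm{\theta},\bm{\psi}) = 0$ and making $\bm{\theta}$ a global residual minimizer, while the OLNE actually computed for this $\bm{\theta}$ need not coincide with the GT, so $\delta_T(\bm{\theta}) > 0$ and trustworthiness fails.

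Finally, for condition 2 I would leverage condition 1 just established: trustworthiness with $\delta_T(\hat{\bm{\theta}}^R) = 0$ requires the existence of a $\bm{\theta}^*$ for which, under the employed basis functions $\bm{\phi}_i$, the MP conditions \eqref{eq:OLNE_1}--\eqref{eq:OLNE_4} are satisfiable by the GT; otherwise Lemma~\ref{lemma:OLNE} forbids the GT from being an OLNE of the parameterized DG. If the basis functions yielding $\delta_T = 0$ are not known and thus not used, no such $\bm{\theta}^*$ exists with the chosen $\bm{\phi}_i$, so $\min_{\bm{\theta}} \delta_T(\bm{\theta}) > 0$, while $\delta_R$ may still admit minimizers whose induced trajectories fail to attain this minimum, contradicting trustworthiness. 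The main obstacle is precisely this condition: one must rule out accidental coincidence of the $\delta_R$ and $\delta_T$ minimizers under misspecified basis functions, which I would address by observing that $\delta_R$ depends on the GT only through $\bm{\phi}_i$ and the MP stationarity residuals, whereas $\delta_T$ is determined by the true data-generating mechanism, so the two minimizers generically decouple under parameter misspecification, in line with the empirical observations of \cite{Inga.2020}.
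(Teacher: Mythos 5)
Your arguments for conditions 1) and 3) follow essentially the same contrapositive reasoning as the paper: for condition 1) you use the identical two sub-cases (non-necessity means a global minimum of $\delta_R$ at the optimal $\bm{\theta}^*$ with $\delta_T(\bm{\theta}^*)=0$ cannot be guaranteed; non-sufficiency means $\delta_R=0$ only certifies the GT as a candidate solution for $\hat{\bm{\theta}}^R$, so the actual OLNE for $\hat{\bm{\theta}}^R$ may differ and $\delta_T(\hat{\bm{\theta}}^R)>0$), and for condition 3) the same observation that $\bm{\alpha}=\bm{0}$ is always a global minimizer of $\delta_R$ but does not deliver a meaningful trajectory-error minimizer. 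These parts are fine.

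The genuine gap is in condition 2). You correctly identify the obstacle yourself: to conclude non-trustworthiness under basis-function misspecification you must rule out that the global minimizers of $\delta_R$ and $\delta_T$ coincide accidentally. But your resolution --- asserting that the two minimizers \emph{generically decouple} because $\delta_R$ depends on the data only through the MP stationarity residuals while $\delta_T$ is determined by the data-generating mechanism, supported by an appeal to empirical observations in \cite{Inga.2020} --- is precisely the statement that needs to be established, not an argument for it; nothing in the structure of \eqref{eq:residual_error_opt} versus \eqref{eq:trajectory_error_x}--\eqref{eq:trajectory_error_u} is shown to force the minimizers apart. The paper closes this by exhibiting a concrete counterexample: the double integrator \eqref{eq:sys_doubleInt}, \eqref{eq:cost_doubleInt} with the true basis set $\bm{\phi}=\mat{u^2 & x_1^2 & x_2^2}^\top$ replaced by the misspecified set $\bm{\phi}=\mat{u^2 & x_2^2}^\top$, for which the computed global minimizers $\hat{\bm{\theta}}^R$ and $\hat{\bm{\theta}}^T$ demonstrably differ and the induced trajectories do not even share qualitative trends (Figs.~\ref{fig:comparisonErrorsBasisFct} and \ref{fig:comparisonTrajectoriesBasisFct}). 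Since condition 2) is a necessity claim, its contrapositive is proved by producing at least one such explicit instance where trustworthiness fails without knowledge of the correct basis functions; your proposal never produces one, so this part of the proof is incomplete. Also note that your reduction "trustworthiness with $\delta_T(\hat{\bm{\theta}}^R)=0$ requires $\ldots$" slightly misreads Definition~\ref{def:trustworthiness_residual_error}, which only demands that a residual minimizer be a trajectory-error minimizer and does not require the minimum of $\delta_T$ to be zero --- this is exactly why the accidental-coincidence case cannot be dismissed abstractly and a counterexample is needed.
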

\begin{proof}
	Regarding the first condition, suppose the GT trajectories $\tilde{\bm{x}}(\cdot)$ and $\tilde{\bm{u}}(\cdot)$ are optimal w.r.t. $\bm{\phi}_i, \forall i \in \mathcal{P}$ and $\bm{\theta}^*$. If the OLNE conditions are not necessary, a global minimum of $\delta_{R}$ at $\bm{\theta}^*$ cannot be guaranteed, although $\delta_{T}(\bm{\theta}^*)=0$. Hence, the residual error would not be trustworthy. If the OLNE conditions are only necessary but not sufficient, $\delta_{R}=0$ holds when the GT trajectories are only candidates for an optimal solution w.r.t. the global minimizer $\hat{\bm{\theta}}^R$. Hence, the trajectories that are indeed optimal w.r.t. $\hat{\bm{\theta}}^R$ can be different: $\delta_{R}=0$ but $\delta_{T}(\hat{\bm{\theta}}^R)>0$. Again, the residual error would not be trustworthy.

	\begin{figure}[t]
		\centering
		\includegraphics[width=2.5in]{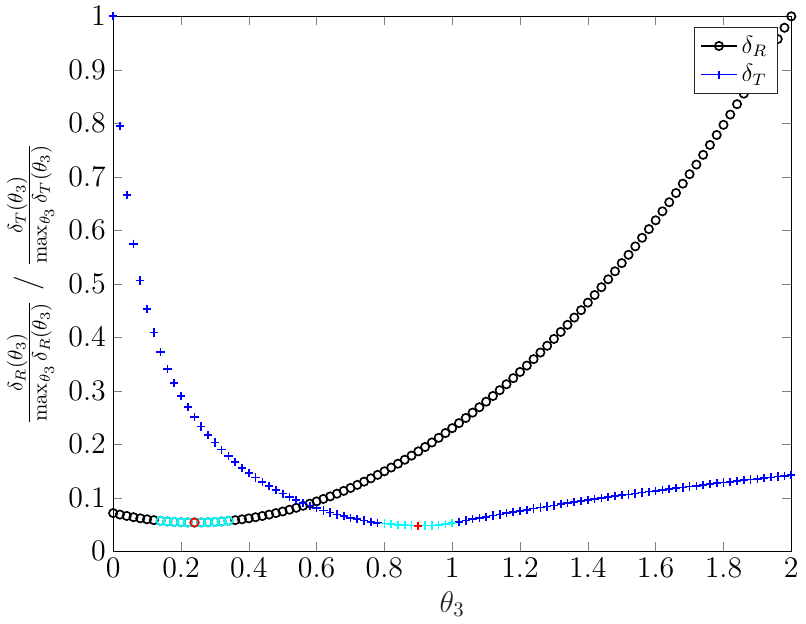}
		\caption{Normalized residual~$\delta_{R}$ and trajectory error~$\delta_{T}$ for the double integrator system when $\delta_{R}$ is not trustworthy.}
		\label{fig:comparisonErrorsBasisFct}
	\end{figure}
	\begin{figure}[t]
		\centering
		\begin{subfigure}[t]{1.65in}
			\includegraphics[width=1.65in]{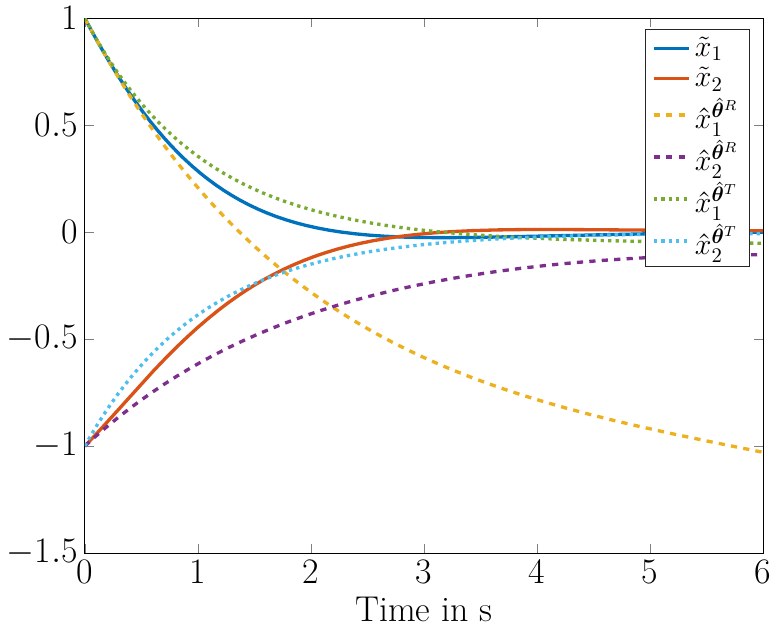}
		\end{subfigure}
		\begin{subfigure}[t]{1.65in}
			\includegraphics[width=1.65in]{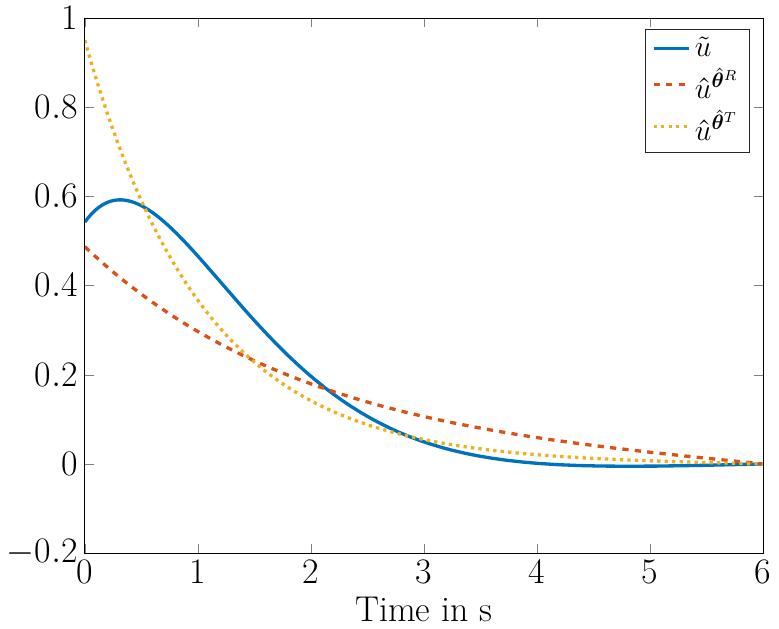}
		\end{subfigure}
		\caption{State and control trajectories of the GT (solid lines), the trajectories resulting from the global minimizer $\hat{\bm{\theta}}^{R}$ of $\delta_R$ (dashed lines) and the trajectories resulting from the global minimizer $\hat{\bm{\theta}}^{T}$ of $\delta_T$ (dotted lines) for the double integrator system when $\delta_R$ is not trustworthy.}
		\label{fig:comparisonTrajectoriesBasisFct}
	\end{figure}
	The second condition is proofed by a counterexample based on the single-player double integrator. Suppose the basis functions~$\bm{\phi}(\bm{x},u) = \mat{u^2 & x_1^2 & x_2^2 }^\top$, which yield $\delta_{T}=0$ with $\bm{\theta}^*$ are unknown and $\delta_R(\bm{\alpha})$ is set up with $\bm{\phi}(\bm{x},u) = \mat{u^2 & x_2^2 }^\top$ ($\bm{\theta}= \mat{\theta_1 & \theta_3}^\top$) instead. Fig.~\ref{fig:comparisonErrorsBasisFct} shows the residual~$\delta_R$ and the trajectory error~$\delta_T$ for varying $\theta_3$-values ($\theta_1=1$ for both errors and $\bm{\psi}(0)=\bm{\psi}^*(0)$ for $\delta_R$). The global minimizers $\hat{\bm{\theta}}^R$ and $\hat{\bm{\theta}}^T$ of both error types differ clearly and $\delta_T(\hat{\bm{\theta}}^T)>0$ at the global trajectory error minimizer. Thus, the residual error is not trustworthy in general when $\delta_T(\hat{\bm{\theta}}^T)>0$ holds with the used basis functions.
	Furthermore, Fig.~\ref{fig:comparisonTrajectoriesBasisFct} shows the trajectories resulting from the global minimizer $\hat{\bm{\theta}}^{R}$ and the ones resulting from $\hat{\bm{\theta}}^{T}$. Especially, by comparing the state trajectories $\hat{\bm{x}}^{\hat{\bm{\theta}}^{R}}(t)$ and $\hat{\bm{x}}^{\hat{\bm{\theta}}^{T}}(t)$, which do not even show similar trends, it can be concluded that in case of not fulfilled condition 2) and thus, missing trustworthiness of the residual error the MP-based IDG method cannot provide a robust parameter identification.
	%Hence, this illustrates the ca. $3$ times higher normalized $\delta_T$ at the residual error minimizer compared to the minimum of the normalized $\delta_T$ in Fig.~\ref{fig:comparisonErrorsBasisFct}.

	With the third condition, the global minimizer $\bm{\alpha}=\bm{0}$ of $\delta_{R}$ is omitted since it yields $\delta_T>0$.
\end{proof}

\begin{remark}
	Condition 1) and 3) in Proposition~\ref{prop:necessary_cond_trustworthiness} need to be guaranteed by performing the minimization of $\delta_{R}(\bm{\alpha})$ w.r.t. the parameters $\bm{\theta}_i$ on appropriate subsets $\Theta_i \subset \mathbb{R}^{M_i}, \forall i \in \mathcal{P}$.
\end{remark}

%Proposition~\ref{prop:necessary_cond_trustworthiness} provides only necessary conditions for the trustworthiness of the residual error. In general, sufficient conditions are hard to establish since an analytical relationship between $\delta_R$ and $\delta_T$ needs to be derived. One important kind of further conditions for sufficiency are excitation conditions for the GT trajectories. If they are not excited enough or zero over a long period of the time horizon, the rank of matrix $\bm{P}_i(0)$~\eqref{eq:residual_error_QP} is reduced leading to global minima of $\delta_R$ that are no global minima of $\delta_T$. A first result for sufficient conditions that are however not necessary can be derived from \cite[Theorem~4.1]{Inga.2020}. If Assumption~\ref{assumption:GT_data} and one of the conditions in \cite[Theorem~4.1]{Inga.2020} hold, all global minimizers of the QP~\eqref{eq:residual_error_QP} are given by $\hat{\bm{\theta}}_i = c \bm{\theta}_i^*$ ($c \in \mathbb{R}_{>0}$) which all lead vanishing trajectory errors $\delta_T=0$.
%\td{as remark in section before, here short reference to there}

\tdd{only Remark 1 for missing model validation (due to no falsification and perhaps through iterating the model, e.g. enhancing the basis functions, no match with at least one global minimum of the trajectory error guaranteed) - model validation with direct approach, here best possible basis function setup; Remark 2: then, we can get an approximate trustworthiness, important in practice since exact trustworthiness not reasonable in practice due to remaining errors between GT and model trajectories - note: trustworthiness also important for computation of reasonable parameters for a validated model with an indirect method (residual-error minimizers need to be from a subset of the trajectory-error minimizers)}
% Since Proposition~\ref{prop:necessary_cond_trustworthiness} provides necessary conditions for the trustworthiness of the residual error. We can make the following conclusions regarding the application of the residual-based IOC method for the identification of unknown agents.
%\begin{remark} \label{remark:residualError_notTrustworthy}
%	If the residual error~$\delta_{R}$ is not trustworthy in the sense of Definition~\ref{def:trustworthiness_residual_error}, we cannot make any kind of conclusion from the global minimizers of $\delta_{R}$ for the global minimum values of $\delta_{T}$. Regardless of $\delta_{T,\delta_{R,\min}}$ achieved with the global minimizer $\hat{\bm{\theta}}^{\text{MP}}$, $\delta_{T,\min} \approx 0$ or $\delta_{T,\min}\gg 0$ is possible for the global minimum trajectory error.
%\end{remark}

With the necessary conditions provided by Proposition~\ref{prop:necessary_cond_trustworthiness}, the applicability of the MP-based IDG method for model validation and robust parameter identification can be discussed. If the residual error is not trustworthy, it is not possible to validate a postulated DG model with measurement data. Firstly, falsification of the model hypothesis is not possible since $\hat{\bm{\theta}}^R$ computed by the indirect IDG method can yield $\delta_{T}(\hat{\bm{\theta}}^R)>0$ although $\delta_{T}(\hat{\bm{\theta}}^T)= 0$ holds at the global trajectory error minimizer. Secondly, it is not guaranteed that by continuously extending the number of basis functions or by using an overparameterized cost function structure $\delta_{T}(\hat{\bm{\theta}}^R)= 0$ can be once achieved although $\delta_{T}(\hat{\bm{\theta}}^T)= 0$ holds. Now, since trustworthiness necessarily depends on knowing the cost function structures (see condition 2) in Proposition~\ref{prop:necessary_cond_trustworthiness}) which in turn are unknown in case of model validation, we conclude Remark~\ref{remark:modelValidation_residualIOCIDG}.
\begin{remark} \label{remark:modelValidation_residualIOCIDG}
	The indirect IDG method based on the MP cannot be used to validate a DG model with a postulated cost function structure (set of basis functions).
\end{remark}

% Trustworthiness is also important for the MP-based IDG method to compute reasonable parameters for the cost function structures. 
Remarkably, our proposed conditions are only necessary in their current form. Although we may find, e.g. via a direct IDG approach\comment{or an overparameterization strategy and the MP-based IDG method}, cost function structures with $\delta_{T}(\hat{\bm{\theta}}^T)= 0$ and $\hat{\bm{\theta}}^T =\hat{\bm{\theta}}^R$, it is not guaranteed that every global residual error minimizer $\hat{\bm{\theta}}^R$ yields $\delta_{T}(\hat{\bm{\theta}}^R)= 0$.

In practice, although the cost function structures are known or condition 2) of Proposition~\ref{prop:necessary_cond_trustworthiness} is validated with a direct IDG method, $\delta_{T}(\hat{\bm{\theta}}^T)= 0$ is typically not possible, e.g. due to measurement noise, and small errors $\delta_{T}(\hat{\bm{\theta}}^T)< \epsilon_T$ remain but are tolerable for the concrete application. Here, $\hat{\bm{\theta}}^T=\hat{\bm{\theta}}^R$ is not possible but the MP-based IDG method can provide, under certain conditions, a robust parameter identification such that $\norm{\hat{\bm{\theta}}^R - \hat{\bm{\theta}}^T}<\epsilon$ where $\epsilon$ is sufficiently small to meet the application requirement $\delta_{T}(\hat{\bm{\theta}}^R)< \epsilon_T$.
Hereto, we look at the double integrator system \eqref{eq:sys_doubleInt}, \eqref{eq:cost_doubleInt} and define as GT data the OC solution with $\bm{\theta}^*$ but add Gaussian noise: $\tilde{\bm{x}}(t) = \bm{x}^*(t) + \bm{\varepsilon}_1(t)$, $\tilde{u}(t) = u^*(t) + \varepsilon_2(t)$ ($\bm{\varepsilon}_1(t) \sim \mathcal{N}(\bm{0},0.1^2 \bm{I})$, $\varepsilon_2(t) \sim \mathcal{N}(0,0.1^2)$). We can assume that the given $\bm{\phi}$ are the best possible basis functions regarding $\delta_T$, i.e. there is no other OC model with a smaller trajectory error. Fig.~\ref{fig:comparisonErrorsNoise} shows the normalized $\delta_R$ (with $\theta_1=\theta_1^*$, $\theta_3=\theta_3^*$ and $\bm{\psi}(0)=\bm{\psi}^*(0)$) and $\delta_T$ (with $\theta_1=\theta_1^*$ and $\theta_3=\theta_3^*$) regarding different $\theta_2$-values. Although an exact match with the GT trajectories is not possible ($\delta_{T}(\hat{\bm{\theta}}^T)> 0$, $\delta_{T}(\hat{\bm{\theta}}^R)> 0$), an approximate trustworthiness, i.e. $\norm{\hat{\bm{\theta}}^R - \hat{\bm{\theta}}^T}<\epsilon$, follows, where $\epsilon$ is small enough such that $\hat{\bm{\theta}}^T$ and $\hat{\bm{\theta}}^R$ both yield nearly identical trajectories with $\delta_{T}(\hat{\bm{\theta}}^T)<\epsilon_T$ and $\delta_{T}(\hat{\bm{\theta}}^R)<\epsilon_T$. In this case, $\epsilon_T$ stems from the simulated measurement noise and is negligible.
Finally, we conclude Remark~\ref{remark:approximate_match_minima}.

\begin{remark} \label{remark:approximate_match_minima}
	If a DG model is validated, e.g. with a direct IDG approach, such that the corresponding cost function structures guarantee the smallest possible trajectory error~$\delta_T$, an approximate trustworthiness of the residual error~$\delta_R$, i.e. $\norm{\hat{\bm{\theta}}^R - \hat{\bm{\theta}}^T} < \epsilon$ can be achieved such that $\epsilon$ is sufficiently small for $\delta_{T}(\hat{\bm{\theta}}^R)<\epsilon_T$ and $\delta_{T}(\hat{\bm{\theta}}^T)<\epsilon_T$ ($\epsilon_T$ depends on the concrete application of the DG model). 
	\comment{approximate match reasonable, since the different error measures can show different global minima when the global minimum values are not necessarily zero at the same point - direct approach more robust regarding the trajectory error since its objective function is based on this error measure (can be used for re-optimizing the parameters achieved by the MP approach which in turn gives a good initialization of the bi-level approach if the basis functions are already good enough)}
\end{remark}

\begin{figure}[t]
	\centering
	\includegraphics[width=2.5in]{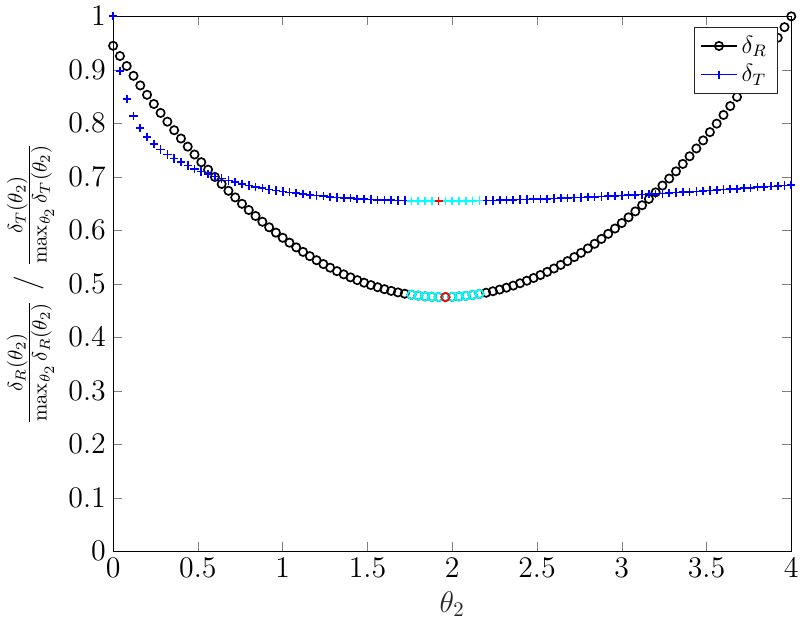}
	\caption{Normalized residual~$\delta_{R}$ and trajectory error~$\delta_{T}$ for the double integrator system when $\delta_{R}$ is approximately trustworthy.}
	\label{fig:comparisonErrorsNoise}
\end{figure}
\begin{figure}[t]
	\centering
	\begin{subfigure}[t]{1.65in}
		\includegraphics[width=1.65in]{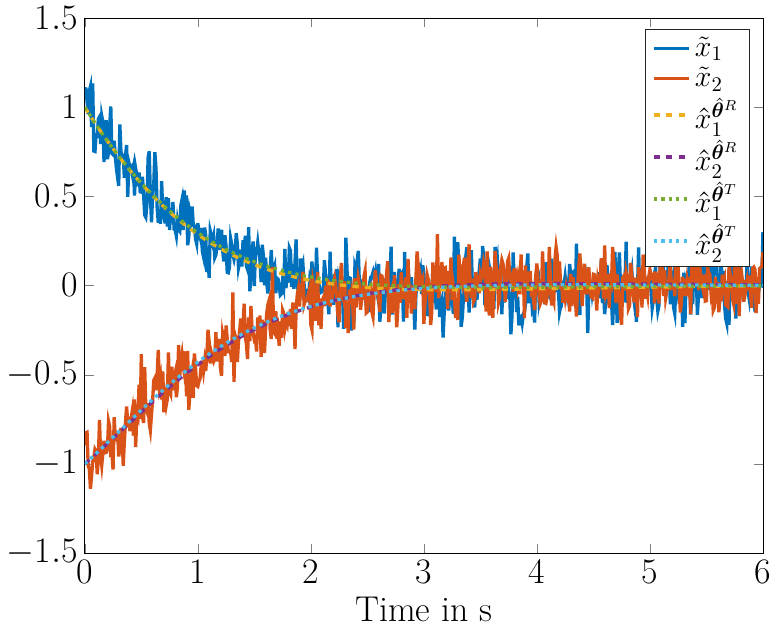}
	\end{subfigure}
	\begin{subfigure}[t]{1.65in}
		\includegraphics[width=1.65in]{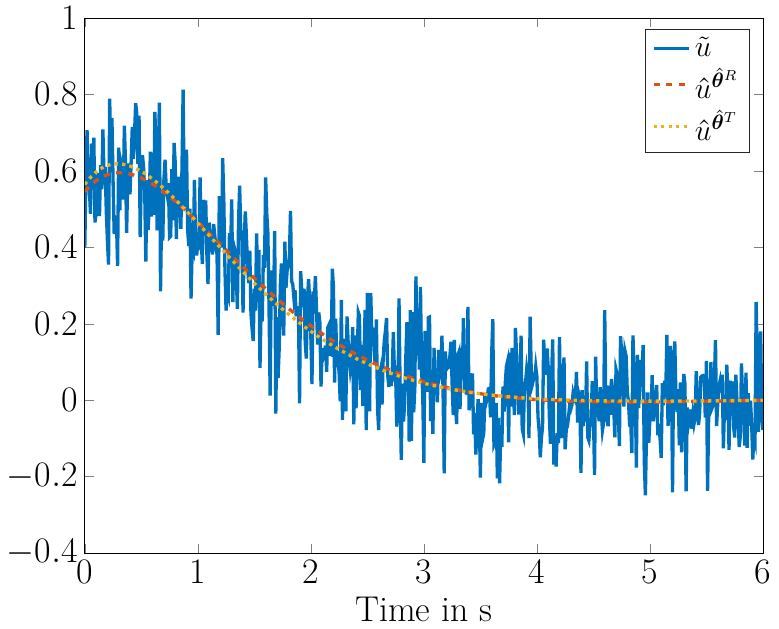}
	\end{subfigure}
	\caption{State and control trajectories of the GT, the trajectories resulting from the global minimizer $\hat{\bm{\theta}}^{R}$ and the trajectories resulting from the global minimizer $\hat{\bm{\theta}}^{T}$ for the double integrator system when $\delta_R$ is approximately trustworthy.}
	\label{fig:comparisonTrajectoriesNoise}
\end{figure}

Remark~\ref{remark:approximate_match_minima} can be beneficial in practice. Whereas validating a model assumption, e.g. for human movements, can typically be performed offline, applications of validated models often require fast or even real-time parameter identification, for example to adapt prediction models for human movements to different persons or changes in their characteristics. The high computation times of direct IDG approaches \cite{Molloy.2017} can be tolerated in offline model validation but hinder such fast/real-time parameter identifications, which however are possible with residual-based methods \cite{Inga.2021}.

\tdd{move in conclusion and clarify that currently from literature only sufficient conditions for trustworthiness (see Lemma 3) can be derived, necessary or necessary/sufficient conditions are missing}
% We strongly suspect that Definition~\ref{def:trustworthiness_residual_error}, Proposition~\ref{prop:necessary_cond_trustworthiness}, Remark~\ref{remark:modelValidation_residualIOCIDG} and Remark~\ref{remark:approximate_match_minima} generalize to residual errors derived in IOC methods based on other optimality conditions.

%---%

\section{Practical Example: Identification of Multi-Agent Collision Avoidance Behavior} \label{sec:multiAgentCollAvoid}

In this section, we illustrate the consequences of Remark~\ref{remark:modelValidation_residualIOCIDG} and Remark~\ref{remark:approximate_match_minima} by analyzing the validity of a differential game model to describe the collision avoidance behavior between two humans driving mobile robots in a simulation environment.

\subsection{Differential Game Model for Multi-Agent Collision Avoidance between Mobile Robots with Human Operators} \label{subsec:diffGameModel}
%\begin{itemize}
%	\item differential game model (system dynamics, basis functions, 2nd collision setting with two players)
%	\item results of IDG method with differential game data as GT data (exact basis functions, problem with not fulfilled 1st condition of Proposition~\ref{prop:necessary_cond_trustworthiness} due to integrating terminal costs in integrand which yields unwanted set of global minima in residual optimization problem (see QP solution set) - introduction of new constraints based on findings regarding the influences of the differential game parameters on the resulting trajectories) 
%	\item footnote: problem with non-unique solutions to TPBVP (test for symmetry solutions by initialization with mirrored first solution)
%	\item no figure here
%\end{itemize}

The dynamics of each robot $i \in \mathcal{P}=\{1,2\}$ are given by $\dot{\bm{x}}_i = \bm{u}_i$, where $x_{i,1}$ denotes the x-position and $x_{i,2}$ the y-position of robot $i$. This yields the complete system dynamics $\dot{\bm{x}} = \mat{ \dot{\bm{x}}^\top_1 & \dot{\bm{x}}^\top_2 }^\top = \bm{u}$. The individual cost function of each human operator $i \in \mathcal{P}$ is modeled by \eqref{eq:cost_function_basisFunctions} where the basis functions~$\bm{\phi}_i(\bm{x},\bm{u})$ follow from
\begin{align} \label{eq:basisFcts_collAvoidExample}
	\bm{\lambda}_i(\bm{x}(T)) &=  \big[ (x_{i,1}(T)-x_{i,T,1})^2 \,\,\, (x_{i,2}(T)-x_{i,T,2})^2 \big]^\top, \\
	\bm{\mu}_i(\bm{x},\bm{u}_i) &= \big[ u_{i,1}^2 \,\,\, u_{i,2}^2 \,\,\, (x_{i,1}-x_{i,T,1})^2 \,\,\, (x_{i,2}-x_{i,T,2})^2 \nonumber \\ &\hphantom{=}\norm{\bm{x}_{i}-\bm{x}_j}^{-1}_2 \,\,\, \norm{\bm{x}_{i}-\bm{x}_j}^{-2}_2 \big]^\top
\end{align}
with $j \in \mathcal{P}$ but $j\neq i$. Moreover, the starting positions are given by $\bm{x}_0 = \mat{ -1\,\text{m} & -0.5\,\text{m} & 1\,\text{m} & 0\,\text{m} }^\top$ and the displayed target points by $\bm{x}_T = \mat{ 1\,\text{m} & 1\,\text{m} & -1\,\text{m} & 0\,\text{m} }^\top$.

Firstly, we apply the IDG method based on the MP to GT trajectories computed by the DG model with $T=5\,\text{s}$, $\bm{\theta}^*_1 = \mat{ 1 & 4 & 0 & 0 & 0.2 & 0 & 100 & 100 }^\top$ and $\bm{\theta}^*_2 = \mat{ 1 & 1 & 0 & 0 & 0.2 & 0 & 100 & 100 }^\top$: $\tilde{\bm{x}}(\cdot) = \bm{x}^*(\cdot)$ and $\tilde{\bm{u}}(\cdot) = \bm{u}^*(\cdot)$. Due to rewriting the terminal costs in the integrand of $J_i$ (cf. \eqref{eq:cost_function_basisFunctions}), the MP conditions are not sufficient anymore since the system dynamics are integrated into the basis functions which violates the convexity assumption on $H_i$ in Lemma~\ref{lemma:OLNE}. Hence, the first condition in Proposition~\ref{prop:necessary_cond_trustworthiness} is not fulfilled and by computing $\hat{\bm{\theta}}^R$ with the QP~\eqref{eq:residual_error_QP} with constraints $\theta_{1,1}=\theta_{2,1}=1$, $\delta_{R,\min} = \delta_R(\hat{\bm{\alpha}}^R) \approx 1.0 \cdot 10^{-7}$ and $\delta_{T,\delta_{R,\min}} = \delta_{T}(\hat{\bm{\theta}}^R) \approx 2.8 \cdot 10^{3}$ follows, although $\delta_{T,\min} = \delta_{T}(\hat{\bm{\theta}}^T)\approx0$ is possible. Therefore, $\theta_{1,7}\geq500$, $\theta_{1,8}\geq500$, $\theta_{2,7}\geq500$ and $\theta_{2,8}\geq500$ are introduced as additional constraints. If these weights of the terminal costs are beyond a certain threshold the robots reach their desired target points but further increasing them does not influence the trajectories which instead are mainly affected by the running cost weights. With these additional constraints, $\delta_{R,\min} \approx 2.8 \cdot 10^{-3}$ and $\delta_{T,\delta_{R,\min}} \approx 12.6$ result.

The Two-Point-Boundary-Value problem (TPBVP) resulting from Lemma~\ref{lemma:OLNE} to compute an OLNE is solved with the bvp4c solver in Matlab throughout this work. Since a unique solution cannot be guaranteed in general, we compute a first solution with a constant trajectory ($\bm{x}_0$) as initial guess. Then, the TPBVP is solved with a second initial guess, which is given by mirroring the trajectories of each robot of the first solution at the line between $\bm{x}_{i,0}$ and $\bm{x}_{i,T}$\footnote{This procedure is motivated by our observation that our setting as well as the DG model formulation tend to possess symmetrical OLNE. For example, in a two-robot setting where the target point of each robot is the starting point of the other robot, there is no preference whether both robots drive in their local coordinate system right or left.}. The TPBVP solution with the smallest $\delta_T$ is used to define $\delta_T$ for a specific parameter vector $\bm{\theta}$.

\subsection{Study Design} \label{subsec:studyDesign}
%\begin{itemize}
%	\item study design (no screenshot/figure)
%	\item hypothesis: OLNE differential game model valid
%	\item cutting of data: start point = first point in time where absolute velocity of one player is beyond threshold; end point = first point in time where all players come to rest (absolute velocities of all players are below threshold) - idea: drop reaction times and correction movements (since not described by DG model)
%	\item evaluation procedure (MP and bi-level method with "standard" parameter initialization, see conclusion regarding model validation but MP parameters can in some cases be good starting points to reduce computation times but here not given)
%	\item IDG evaluation (reached point = target point, no stochastic model used here) of all "valid" 2 player settings with residual-based IDG and bi-level IDG (exemplary plots, table/figure as summary of all results)
%	\item "valid": no collision, not in familiarization phase, target points reached in determined time horizon
%\end{itemize}

We conducted a simulation study where two humans controlled each a holonomic robot platform with a Playstation 4 controller in the x-y-plane in a Gazebo simulation (cf. Fig.~\ref{fig:screenshotStudy}). The task for both humans was to drive its robot in a given maximum time horizon of $5\,\text{s}$ from the starting position $\bm{x}_{i,0}$ to the displayed target point $\bm{x}_{i,T}$ (cross in the same color as the robot, yellow for robot $1$ and purple for robot $2$, see Fig.~\ref{fig:screenshotStudy}) while avoiding collision to the other robot and the test field bounds. The target point was considered as reached if it was inside the rectangular robot ($0.33\,\text{m} \times 0.31\,\text{m}$). A mutual visual starting signal for both players was given via the graphical user interface. Communication between the participants was prohibited before and during the experiment. The collision avoidance experiment was conducted with four groups with two participants each.
\begin{figure}[t]
	\centering
	\includegraphics[width=3in]{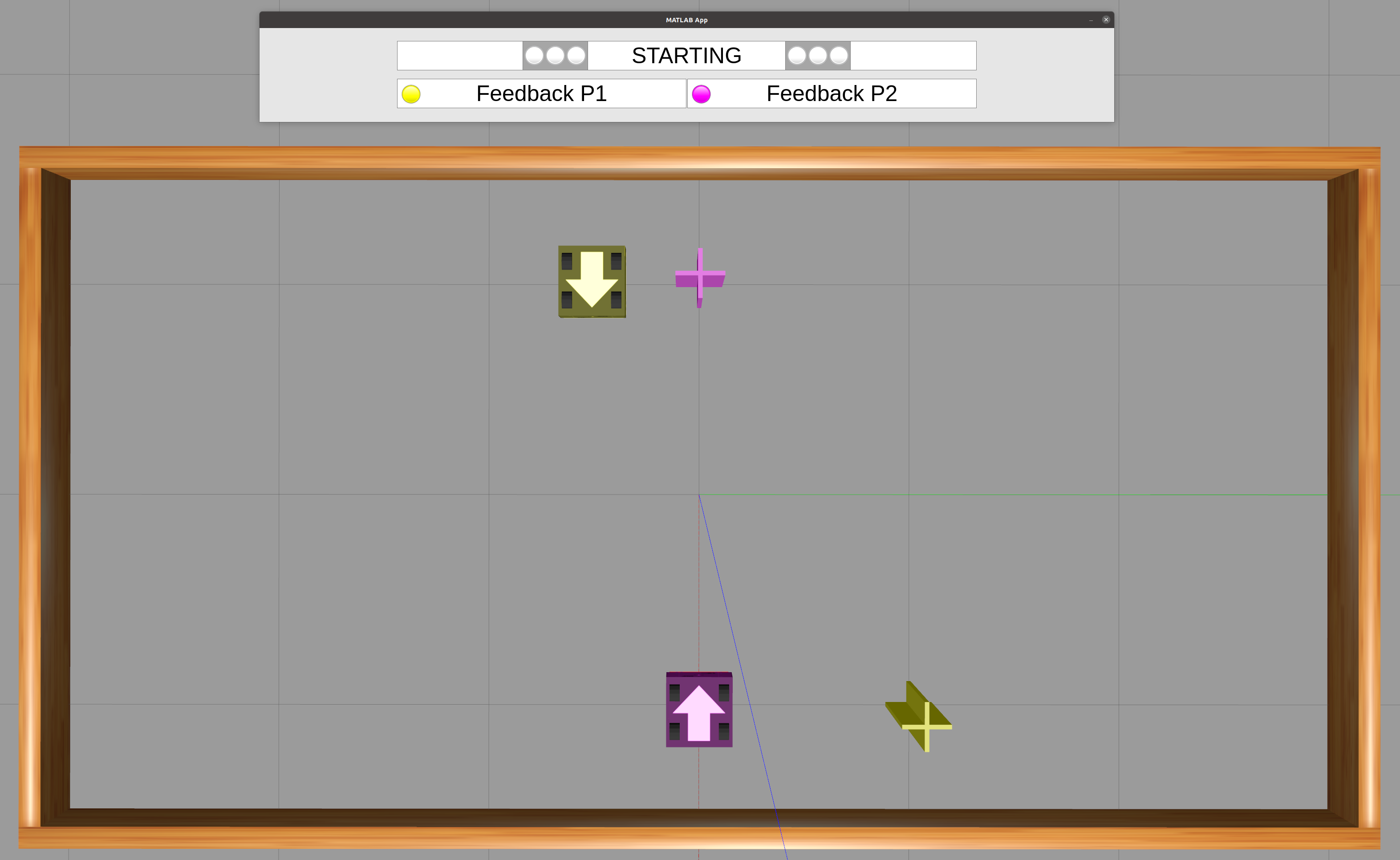}
	\caption{Screenshot of the simulation study to analyze the collision avoidance behavior between two humans.}
	\label{fig:screenshotStudy}
\end{figure}

Motivated by OC models for human movements (see e.g. \cite{Berret.2011,Mombaur.2010}), we hypothesize that the observed behavior can be described by an OLNE. The model assumption is checked by applying data-based IDG methods to the observed GT data.
Before the actual collision avoidance experiment between both participants, a familiarization phase was implemented for each participant where the participant had to drive to $10$ arbitrary, successively displayed target points. Then, each group performed $10$ trials of the collision avoidance experiment as training phase to learn the other player's behavior and then, $10$ trials to collect the GT data. The center of the robot was defined as $\bm{x}_i$ and its position was recorded with $50\,\text{Hz}$. The corresponding control signal $\bm{u}_i$ follows from numerical differentiation and a cubic spline smoothing. The start time $t_0$ of the DG model of one trial was defined as the first point in time where for one robot $\norm{\bm{u}_i}_2>0.15\,\frac{\text{m}}{\text{s}}$ holds and the final time $T$ as the first point in time where $\norm{\bm{u}_i}_2<0.1\,\frac{\text{m}}{\text{s}}, \forall i \in \mathcal{P}$. Finally, $26$ valid trials result where no collisions occurred and the target points are reached at the final time $T$. 
%For the valid trials, the model Assumption~\ref{assumption:DG_model} is suggested.
%\begin{assumption} \label{assumption:DG_model}
%	The collision avoidance behavior between two mobile robots with human operators can be described by an OLNE.
%\end{assumption}
For the valid trials, the DG model of Section~\ref{subsec:diffGameModel} is suggested.
In order to validate this model assumption, the measured GT trajectories $\tilde{\bm{x}}(\cdot)$, $\tilde{\bm{u}}(\cdot)$ of each trial are used to compute parameters $\hat{\bm{\theta}}$. The MP-based IDG method yields $\hat{\bm{\theta}}^{R}$ at the global minimum residual error $\delta_{R,\min}$ with trajectory error $\delta_{T,\delta_{R,\min}} = \delta_{T}(\hat{\bm{\theta}}^R)$ and the direct bi-level-based method minimizing $\delta_{T}$ with the pattern search algorithm\comment{"standard" initialization: ones for all parameters, except for $\bm{\zeta}_i=750$} of the Matlab environment yields $\hat{\bm{\theta}}^{T}$ at the global minimum trajectory error $\delta_{T,\min}$. For both methods, the constraints as defined in Section~\ref{subsec:diffGameModel} are used as well as the definition of $\delta_T$. For each trial, $\bm{x}_T$ of the DG model is set to $\bm{x}_T=\tilde{\bm{x}}(T)$, where $T$ is the game time horizon determined for each trial based on the absolute velocity thresholds defined before. 
% It is not guaranteed that the proposed $\bm{\phi}_i$ are the best possible basis functions. Then, $\delta_{T,\min}$ is used to evaluate Assumption~\ref{assumption:DG_model}. We reject the assumption if for the maximum $\delta_{T,\min}\geq250$ (threshold qualitatively derived from Fig.~\ref{fig:comparisonTrajectoriesCollAvoidBest}) holds. 
%Results would be treated as outliers when they exceed the whisker length defined by $1.5$ interquartile length.
\commentd{here, we only define a criterion to reject the model assumption since we only want to examine whether the DG model is in principle valid to describe the GT data; typically, one would introduce a comparison model and require significant improvement of the DG model compared to the baseline model and a sufficiently good match with the GT data, i.e. formulation of a hypothesis and statistical tests; additionally, validation and training data would be separated}

\subsection{Study Results} \label{subsec:studyResults}
\begin{table}[t]
	\caption{IDG results for the five best and the five worst trials according to $\delta_{T,\min}$.}
	\label{table:study_results}
	\renewcommand{\arraystretch}{1.15}
	\begin{center}
		\begin{tabular}{ | c | c | c | c | c | }
			\hline
			Group & Trial Number & $\delta_{R,\min}$ & $\delta_{T,\delta_{R,\min}}$ & $\delta_{T,\min}$ \\ 
			\hline
			\hline
			 $1$ & $14$ & $23.6$ & $353.2$ & $249.8$ \\  
			 \hline
			 $2$ & $15$ & $30.3$ & $1.2 \cdot 10^{3}$ & $257.6$ \\
			 \hline
			 $4$ & $16$ & $1.1 \cdot 10^{3}$ & $1.3 \cdot 10^{3}$ & $269.5$ \\
			 \hline
			 $1$ & $17$ & $42.5$ & - & $302.8$ \\
			 \hline
			 $4$ & $11$ & $252.0$ & - & $314.3$ \\
			 \hline
			 \hline
			  $2$ & $12$ & $178.0$ & - & $558.3$ \\
			   \hline
			  $4$ & $17$ & $801.0$ & - & $574.4$ \\
			   \hline
			  $3$ & $13$ & $322.4$ & $5.0 \cdot 10^{16}$ & $601.5$ \\
			   \hline
			  $3$ & $20$ & $18.5$ & - & $730.5$ \\
			   \hline
			  $3$ & $18$ & $2.1 \cdot 10^{3}$ & - & $776.5$ \\
			   \hline
		\end{tabular}
	\end{center}
\end{table}
\begin{figure}[t]
	\centering
	\begin{subfigure}[t]{1.65in}
		\includegraphics[width=1.65in]{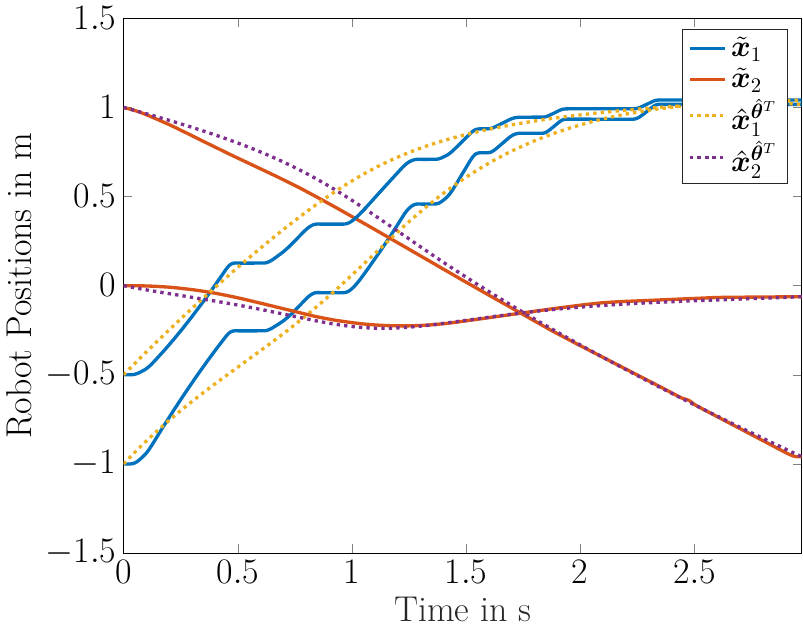}
	\end{subfigure}
	\begin{subfigure}[t]{1.65in}
		\includegraphics[width=1.65in]{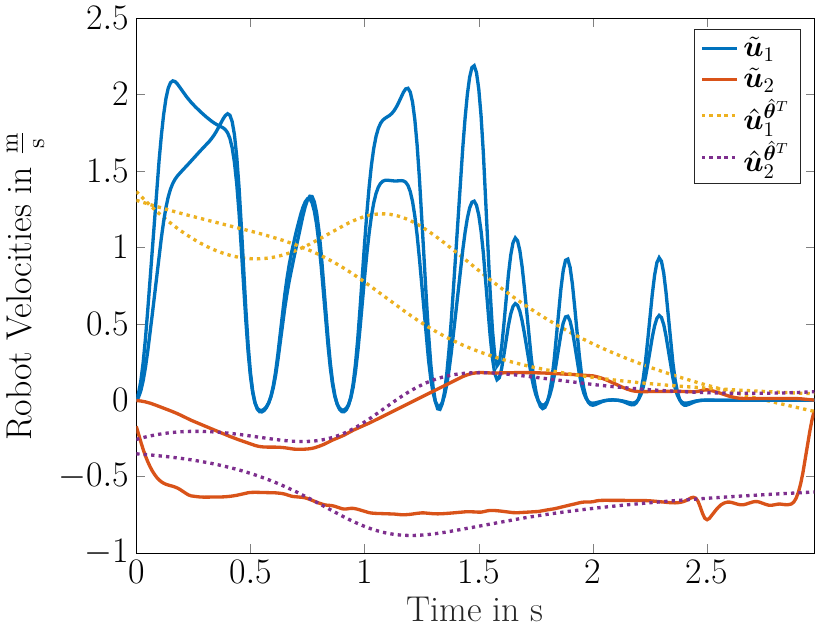}
	\end{subfigure}
	\caption{State and control trajectories of the GT and the direct bi-level-based identification for group $2$/trial $15$.}
	\label{fig:comparisonTrajectoriesCollAvoidBest}
\end{figure}
\begin{figure}[t]
	\centering
	\begin{subfigure}[t]{1.65in}
		\includegraphics[width=1.65in]{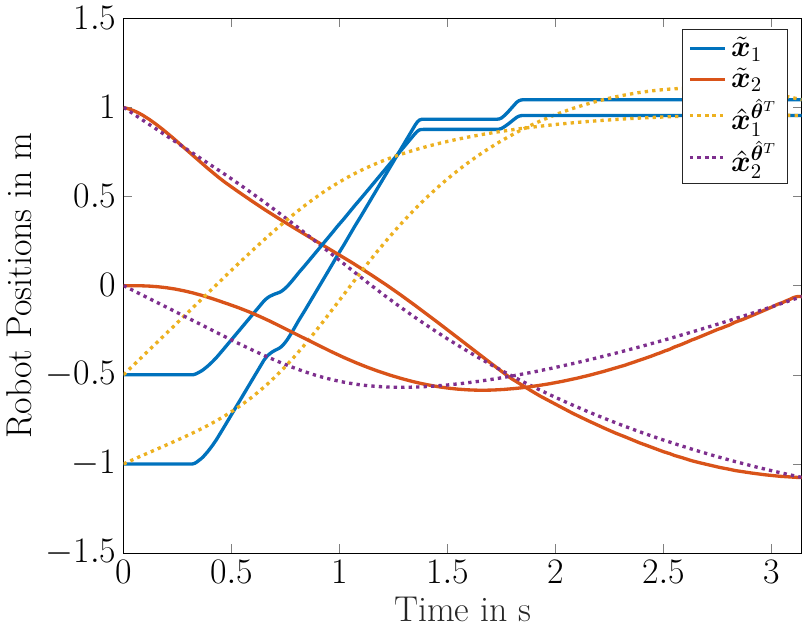}
	\end{subfigure}
	\begin{subfigure}[t]{1.65in}
		\includegraphics[width=1.65in]{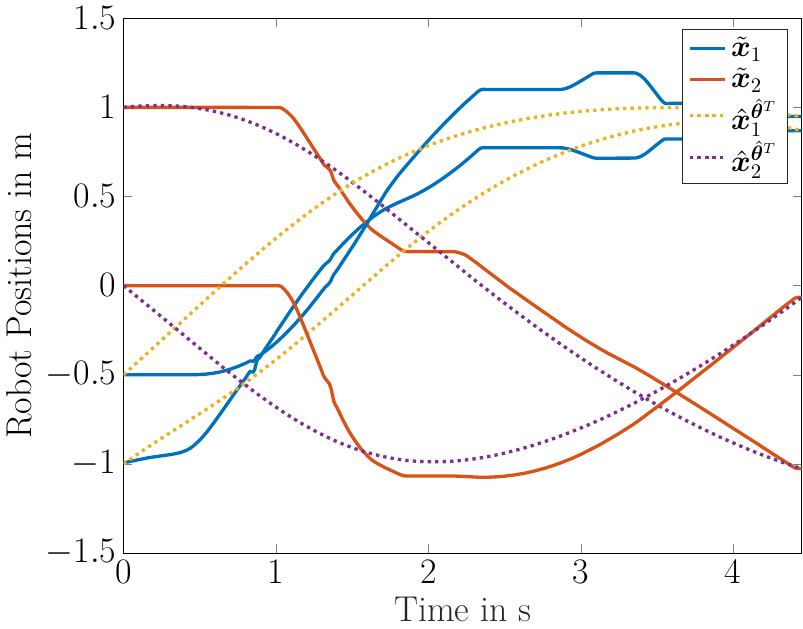}
	\end{subfigure}
	\caption{State trajectories of the GT and the bi-level-based identification for group $2$/trial $17$ ($\delta_{T,\min}\approx434.8$) and group $4$/trial $17$ ($\delta_{T,\min}\approx574.4$).}
	\label{fig:comparisonTrajectoriesCollAvoidThreshold}
\end{figure}
\begin{figure}[t]
	\centering
	\begin{subfigure}[t]{1.65in}
		\includegraphics[width=1.65in]{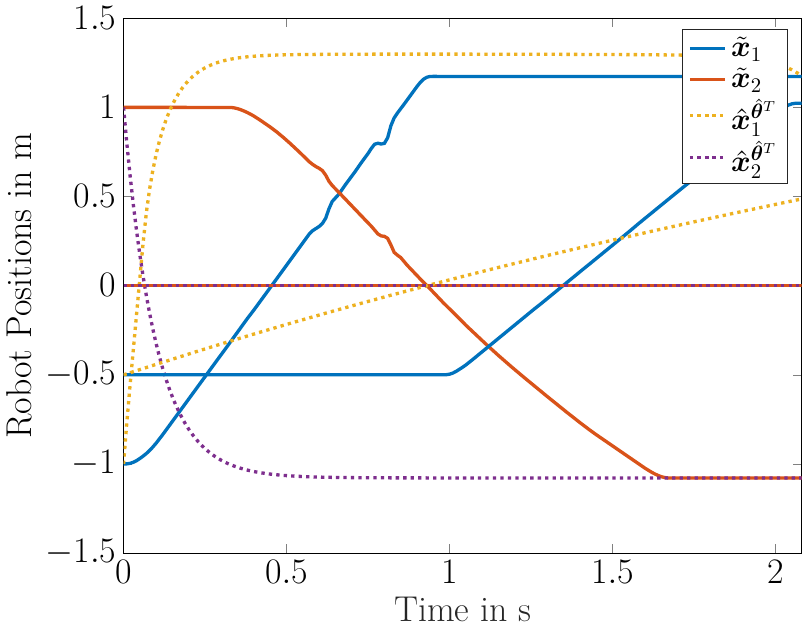}
	\end{subfigure}
	\begin{subfigure}[t]{1.65in}
		\includegraphics[width=1.65in]{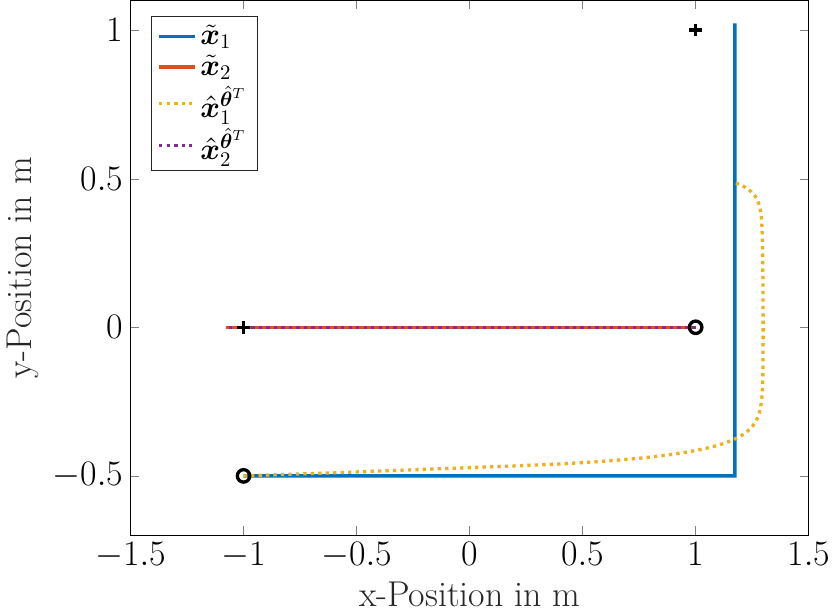}
	\end{subfigure}
	\caption{State trajectories and paths of the GT and the direct bi-level-based identification for group $3$/trial $18$.}
	\label{fig:comparisonTrajectoriesCollAvoidWorst}
\end{figure}
Table~\ref{table:study_results} shows the achieved results of the IDG evaluation described in Section~\ref{subsec:studyDesign}. The trials are sorted according to $\delta_{T,\min}$ and for clarity, only the five best and five worst trials are shown\comment{in order to keep computation time tractable, the maximum number of iterations for the pattern search algorithm was set to $200$; hence, most likely even smaller $\delta_{T,\min}$ values can be reached since in all cases the pattern search stopped prematurely due to reaching the maximum number of iterations}.
Except for group $1$/trial $14$, in all cases $\delta_{T,\delta_{R,\min}}$ and $\delta_{T,\min}$ differ by at least one order of magnitude and in $19$ trials an OLNE cannot even be computed with the parameters $\hat{\bm{\theta}}^{R}$ (see missing values for $\delta_{T,\delta_{R,\min}}$ in Table~\ref{table:study_results}). This highlights that the residual error is not trustworthy in this practical example due to the unknown optimal basis function configuration. Thus, according to Remark~\ref{remark:modelValidation_residualIOCIDG}, the MP-based IDG method is not suitable to evaluate the DG model assumption. The minimum, mean and maximum computation times for $\hat{\bm{\theta}}^{R}$ are $0.33\,\text{s}$, $0.55\,\text{s}$ and $1.19\,\text{s}$, respectively. In contrast, the minimum, mean and maximum computation times for $\hat{\bm{\theta}}^{T}$ are $2.30 \cdot 10^{3}\,\text{s}$, $4.41 \cdot 10^{3}\,\text{s}$ and $6.67 \cdot 10^{3}\,\text{s}$, respectively, and thus, nearly four orders of magnitude higher\footnote{The computation times were achieved with a Ryzen 9 5950X. The $16$ available cores of the CPU were used for parallel implementation of the pattern search algorithm.}.

By evaluating $\delta_{T,\min}$ for the valid trials, it needs to be concluded that the proposed DG model is in its current form not valid to describe the human collision avoidance behavior. Considering the result for the control trajectories in Fig.~\ref{fig:comparisonTrajectoriesCollAvoidBest}, where $\delta_{T,\min} \approx 250$, the DG model fails to describe $\tilde{\bm{u}}_1(t)$. Hence, $\delta_{T,\min} \ll 250$ would be necessary to accept the model assumption. Remarkably, when the DG model is used to generate the GT trajectories (see Section~\ref{subsec:diffGameModel}), $\delta_{T,\min}\approx 13$ is achieved and results in a match between estimated and GT trajectories.

\subsection{Discussion} \label{subsec:discussion_studyResults}
The main reason for the relative high trajectory errors, follows from the GT control trajectories. Like in Fig.~\ref{fig:comparisonTrajectoriesCollAvoidBest} for $\tilde{\bm{u}}_1(t)$, we observe in all trials concatenations of several single peaked bell-shapes. This indicates that the participant has chosen a sequence of intermediate target points building the trajectory to the desired displayed target point. The intermediate target points were chosen based on the observations of the other player's behavior. Since the DG model extends OC models for single human movements, which describe one-shot movements to a single goal with a single peaked bell-shaped velocity profile, it gets clear that the DG defined in Section~\ref{subsec:diffGameModel} with one target point fails in describing the velocity profiles of the GT data with several peaks, which occur at asynchronous points in time between both players. 
The multi-agent collision avoidance behavior between two humans seems to result from a rapid interplay between two sensorimotor control levels, the action ("movement to a single goal") and the decision level ("decision on the actual goal"). Understanding the features of sensorimotor control levels regarding human decision making and their interfaces to the action level as well as suitable mathematical models are still open questions \cite{Gallivan.2018,Schneider.2022}\comment{in a future model, able to describe this rapid interplay, for the action level part a bounded (most likely reasonable, although here saturation in bounds not really observed) FNE should be chosen, incl. also a model for the human biomechanics}\comment{question: is DG model really necessary then in this case if the conflicts in this situation with a rapid interplay between decision and action level are solved by negotiate a series of intermediate target points building collision-free trajectories? however, in case of physically-coupled situation a DG model still important to describe sharing control effort}\comment{small adjustment in study design (if experimental setup is used in future work): control only x-y-positions with HMI}. 

Although the specific sensorimotor control features of the human control signals cannot be adequately described by the DG model, it can still be sufficiently good for a concrete application where a small state trajectory error is sufficient, e.g. planning collision-free trajectories for an automated mobile robot\comment{online identification of collision avoidance behavior regarding position or offline identification of this collision avoidance behavior in similar settings (start/end points, number of partners) - then, plan collision-free trajectory for automated mobile platform}. Fig.~\ref{fig:comparisonTrajectoriesCollAvoidBest} qualitatively shows a nearly perfect match between $\hat{\bm{x}}^{\hat{\bm{\theta}}^T}(\cdot)$ and $\tilde{\bm{x}}(\cdot)$ with a state trajectory error $\delta^{\bm{x}}_{T,\min}\approx 63.8$. Similar results follow for the other trials of the groups $1$, $2$ and $4$ (see e.g. Fig.~\ref{fig:comparisonTrajectoriesCollAvoidThreshold}). In Fig.~\ref{fig:comparisonTrajectoriesCollAvoidThreshold}, the higher trajectory and state trajectory errors ($\delta^{\bm{x}}_{T,\min}\approx142.5$ for group $2$/trial $17$, $\delta^{\bm{x}}_{T,\min}\approx258.5$ for group $4$/trial $17$) result from the reaction time of player $1$ and $2$, respectively. Reaction times are not integrated in the DG model and via the definition of the starting time $t_0$ based on the absolute velocity threshold of the fastest robot only the reaction time of the fastest player can be compensated. Despite this problem, in Fig.~\ref{fig:comparisonTrajectoriesCollAvoidThreshold}, the identified trajectories still predict the trend of the GT state trajectories well. This conclusion only does not apply to some trials of group $3$ which yield the worst results in Table~\ref{table:study_results} (see e.g. Fig.~\ref{fig:comparisonTrajectoriesCollAvoidWorst}). Here, one participant (player $1$) solved the collision avoidance problem by choosing an intermediate target point at $\mat{ 1.2\,\text{m} & -0.5\,\text{m} }^\top$ and bypassed the other player completely. The movements decompose into one movement to a single goal for player $2$ and two movements to single goals for player $1$, which again cannot be described by the DG model in its current form.

With these practical results, we can also illustrate Remark~\ref{remark:approximate_match_minima}. Although $\delta_{T,\min}$ may be sufficiently small for an application to compute collision-free trajectories, i.e. $\delta_{T,\min} < \epsilon_T$, a robust parameter identification with the MP-based IDG method is not given and DG parameters need to be computed by a direct IDG approach. In contrast to the noisy GT trajectories of the double integrator example, the postulated basis functions in the DG model do not provide the best DG fit to the GT data: other basis functions could lead to smaller trajectory errors $\delta_{T}$.

\section{Conclusion} \label{sec:conclusion}
The main assumption in residual-based IDG methods to find parameters which yield trajectories matching the observed ones is that the given trajectories are the solution of a DG problem where the cost function structure is known, e.g. the set of basis functions. In applications of IDG methods, like the identification of unknown agents, e.g. humans, this assumption cannot be guaranteed. In this paper, we introduce and prove necessary conditions for the trustworthiness of the residual error derived from the MP. If the residual error is trustworthy, the MP-based IDG method yields the best possible parameters regarding the error to the observed GT trajectories with the used cost function structure. However, since the knowledge of this cost function structure is necessary for the trustworthiness, we conclude that the MP-based IDG method cannot be used to validate DG models in practice. We illustrate this problem by validating a DG model for the collision avoidance behavior between two mobile robots with human operators. Since the humans' cost function structures are unknown, the residual-based IDG approach fails in determining a robust parameter identification for a postulated set of basis functions. However, a direct IDG method is able to find parameters which yield in most cases state trajectories sufficiently close to the GT ones. We strongly suspect that our findings generalize to all residual-based IDG methods, which we intend to show in future work.

%---%

\bibliographystyle{IEEEtran}

\bibliography{IEEEabrv,References3}

\end{document}